\documentclass[11pt,twoside]{article}
\usepackage{etex}
\usepackage[dvipsnames]{xcolor} 
\usepackage{color}
\usepackage{tcolorbox}
\usepackage{booktabs}
\usepackage{amsthm}
\usepackage{amsfonts,amssymb,amsxtra,url,float} 
\allowdisplaybreaks[4] 
\usepackage[colorlinks,
  linkcolor=magenta, %
  anchorcolor=Periwinkle,
  citecolor=red,
  urlcolor=blue
  ]{hyperref} 
\usepackage{enumitem}
\usepackage{geometry} 
\usepackage{rotating} 
\usepackage{lscape} 
\usepackage{multirow}
\usepackage{graphicx} 
\usepackage{subfigure} 
\usepackage{tikz}
\usepackage{pgfplots}
\usepackage{tikz-3dplot}
\usetikzlibrary{patterns}
\usetikzlibrary{3d,calc}
\usetikzlibrary{decorations.pathreplacing,decorations.markings}
 \tikzset{
  on each segment/.style={
    decorate,
    decoration={
      show path construction,
      moveto code={},
      lineto code={
        \path [#1]
        (\tikzinputsegmentfirst) -- (\tikzinputsegmentlast);
      },
      curveto code={
        \path [#1] (\tikzinputsegmentfirst)
        .. controls
        (\tikzinputsegmentsupporta) and (\tikzinputsegmentsupportb)
        ..
        (\tikzinputsegmentlast);
      },
      closepath code={
        \path [#1]
        (\tikzinputsegmentfirst) -- (\tikzinputsegmentlast);
      },
    },
  },
  mid arrow/.style={postaction={decorate,decoration={
        markings,
        mark=at position 0.6 with {\arrow[#1]{stealth}} 
      }}},
}
\usetikzlibrary{arrows}
\usetikzlibrary{trees}

\usetikzlibrary{matrix}
\usetikzlibrary{patterns}
\usetikzlibrary{shadings} 
\usepackage{fancyhdr} 
\def\headertitle{Kleisli convolution representations of power monoids}
\def\fstpage{1} 
\def\page{$\begin{matrix} {\color{white}0} \\ \thepage \end{matrix}$} 

\usepackage[all]{xy} 
\usepackage{dsfont} 
\usepackage{cite}
\usepackage{mathrsfs} 
\numberwithin{figure}{section}
\usepackage{marginnote} 
\usepackage{graphicx} 
\usepackage{multicol} 

\usepackage{enumitem}
\setenumerate[1]{itemsep=0pt,partopsep=0pt,parsep=\parskip,topsep=3pt}
\setitemize[1]{itemsep=0pt,partopsep=0pt,parsep=\parskip,topsep=3pt}
\setdescription{itemsep=0pt,partopsep=0pt,parsep=\parskip,topsep=3pt}
\setlist[itemize]{leftmargin=35pt}
\setlist[enumerate]{leftmargin=35pt}
\usepackage{changepage} 
\newcommand{\checks}[1]{{\color{black}{#1}}} 

\usepackage{contour} 
\usepackage{xcolor}
\contourlength{0.03em}
\contournumber{auto}

\newtheorem{theoremAlph}{Theorem}

\newtheorem{theorem}{Theorem}[section]
\newtheorem{lemma}[theorem]{Lemma}
\newtheorem{corollary}[theorem]{Corollary}
\newtheorem{main theorem}[theorem]{Main Theorem}
\newtheorem{proposition}[theorem]{Proposition}
\newtheorem{definition}[theorem]{Definition}

\newtheorem{remark}[theorem]{Remark}
\newtheorem{example}[theorem]{Example}

\numberwithin{equation}{section}

\def\orcid{
\begin{tikzpicture}[baseline=-1mm]
\filldraw[Green!35] (0,0) circle (5pt);
\filldraw[white] (0,0) node{\tiny\textbf{iD}};
\end{tikzpicture}
}

\def\Hom{\mathrm{Hom}}

\newcommand{\defines}[1]{{\it\color{blue!75}{#1}}}
\title{\bf Kleisli convolution representations of power monoids \footnote{The authors are normally listed alphabetically by surname.
In this submission, however, Haicun Wen is placed as the first author to satisfy the doctoral graduation requirement of Northwest Normal University.}
}

\vspace{5mm}

\author{
Haicun Wen$^{\ref{Author1}, \href{https://orcid.org/0009-0007-6552-1703}{\orcid}\ref{orcid1}}$, ~
Jian He$^{\ref{Author2}, \href{https://orcid.org/0009-0007-6001-1851}{\orcid}\ref{orcid2}}$, ~
Yu-Zhe Liu$^{\ref{Author3}, \href{https://orcid.org/0009-0005-1110-386X}{\orcid}\ref{orcid3}~\ref{CorrespondingAuthor}}$
}
\date{ }
\begin{document}






\maketitle

\begin{enumerate}[label=\textbf{\color{red}\arabic*}] \footnotesize
  \item
    \begin{center}
      College of Mathematics and Statistics, Northwest Normal University, Lanzhou 730070, China;

      E-mail: \url{Wenhaicun1990@163.com} (H. Wen)
    \end{center} \label{Author1}
 \item
    \begin{center}
      Department of Applied Mathematics, Lanzhou University of Technology, Lanzhou 730050, China;

      E-mail:  \url{jianhe30@163.com} (J. He)
    \end{center} \label{Author2}
  \item
    \begin{center}
      School of Mathematics and Statistics, Guizhou University,
      Guiyang 550025, Guizhou, China;

      E-mail:  \url{liuyz@gzu.edu.cn} / \url{yzliu3@163.com} (Y.-Z Liu)
    \end{center} \label{Author3}
\end{enumerate}
\vspace{2mm}
\begin{enumerate}[label=\textbf{\color{red}$\dag$}]
  \item \footnotesize
    \begin{center}
      Corresponding author
    \end{center} \label{CorrespondingAuthor}
\end{enumerate}
\vspace{2mm}
\begin{enumerate} \footnotesize
    \item[{\orcid}] \centering  ORCID: \href{https://orcid.org/0009-0007-6552-1703}{0009-0007-6552-1703}
      \label{orcid1}
    \item[{\orcid}] \centering  ORCID: \href{https://orcid.org/0009-0007-6001-1851}{0009-0007-6001-1851}
      \label{orcid2}
    \item[{\orcid}] \centering  ORCID: \href{https://orcid.org/0009-0005-1110-386X}{0009-0005-1110-386X}
      \label{orcid3}
\end{enumerate}

\vspace{1mm}

\begin{adjustwidth}{1cm}{1cm}
  \noindent \footnotesize
  \textbf{Abstract}:
We show that power semigroups of groups, and more generally reduced finitary power monoids, arise naturally as convolution monoids in Kleisli categories of powerset monads:
\begin{itemize}
  \item for the non-empty powerset monad, the Kleisli Hom-space $\mathrm{Hom}_{\mathbf{Kl}(\mathscr P_+)}(1,G)$ is isomorphic to the power monoid $\mathcal P_+(G)$;
  \item for the reduced finite powerset monad on pointed sets, the Kleisli Hom-space $\mathrm{Hom}_{\mathbf{Kl}(\mathscr P_{\mathrm{fin}})}$ $(\mathbb Z/2\mathbb Z,H)$ is isomorphic to the reduced finitary power monoid $\mathcal P_{\mathrm{fin},1}(H)$.
\end{itemize}
This unifies several constructions in power semigroup theory: Kleisli convolution representations of semigroups, base change along surjective group homomorphisms, and rigidity of automorphism groups.
As an application, we prove that for every proper numerical monoid $S$, the Kleisli Hom-monoid $\mathrm{Hom}_{\mathbf{Kl}(\mathscr P_{\mathrm{fin}})}(\mathbb Z/2\mathbb Z,S)$ is rigid. This gives a proof of the Tringali--Yan conjecture in the language of Kleisli categories. It should be mentioned that this conjecture was already proved by Bhowmik and Tringali in a preprint posted on arXiv on July 25, 2026.

\vspace{1mm}

\noindent
\textbf{2020 Mathematics Subject Classification}:
18C15 
18D15 
20M30 
20M50 
\label{2020MSC}

\vspace{1mm}

  \noindent
    \textbf{Keywords}: Kleisli category; Kleisli arrow; Kleisli point
     \label{Keywords}
\end{adjustwidth}

\newpage
\setcounter{tocdepth}{2}
\tableofcontents


\def\compos{{\begin{smallmatrix}\circ\end{smallmatrix}}}

\section{Introduction}

\def\Set{\mathbf{Set}}
\def\Cat{\mathbf{C}}
\def\Obj{\mathrm{Obj}}
\def\Mor{\mathrm{Mor}}
\def\compos{\ \lower-0.2ex\hbox{\tikz\draw (0pt, 0pt) circle (.1em);} \ }
\def\Setstar{\mathbf{PSet}}
\def\Kl{\mathbf{Kl}}
\def\Relplus{\mathbf{Rel}_+}
\def\Pplus{\mathcal P_+}
\def\fin{\mathrm{fin}}
\def\Pfin{\mathcal P_{\mathrm{fin}}}
\def\Pfinred{\mathcal P_{\mathrm{fin},1}}
\def\Aut{\operatorname{Aut}}
\def\id{\operatorname{id}}
\def\Sub{\operatorname{Sub}}
\def\Ima{\operatorname{Im}}
\def\Ker{\operatorname{Ker}}
\newcommand{\To}[1]{\mathop{-\!\!\!-\!\!\!\longrightarrow}\limits^{#1}}
\newcommand{\oT}[1]{\mathop{\longleftarrow\!\!\!-\!\!\!-}\limits^{#1}}

Let $H$ be a semigroup.  The multiplication of $H$ extends to its non-empty subsets by $AB:=\{ab:a\in A,\ b\in B\}$.
The resulting power semigroup is a classical construction in semigroup theory, going back at least to the works of Tamura and Shafer in \cite{TS1967}.
One of its basic structural questions \emph{asks to what extent the power semigroup determines the original semigroup}.
This question led to the study of globally determined classes and related isomorphism problems, see for example
\cite{Mog1973, GI1984, GanZhao2014, Tri2025, Rago2026Counterexample, Rago2026}.
Standard background on semigroups and their representations may be found in \cite{How1995}.
For a monoid $H$, $\Pfinred(H)=\{A\subseteq H:A\text{ is finite and }1_H\in A\}$ is also a monoid,
and we call it the reduced finitary power monoid of $H$.
Power monoids have recently been studied from the viewpoints of factorization theory and arithmetic combinatorics
\cite{FT2018, BG2025, CT2026, DGHLS2025, Tri2026}.
For numerical monoids, the ambient additive order supplies particularly effective tools, where the required background on numerical monoids is given in \cite{ADGS2020}.
Automorphism groups and rigidity phenomena for finitary or reduced power monoids have been determined in several important cases,
including $\mathbb N$, $\mathbb Z$, finite abelian groups, and numerical monoids \cite{Rago2025, TW2025, TY2025, TW2026, WXZZ2025}.
Related results concerning torsion groups and the Bienvenu--Geroldinger conjecture appear in \cite{TriYan2025, TY2026}.

The Kleisli construction is one of the standard categorical constructions associated with a monad, and it is the origin of Kleisli categories, which were initially introduced for the study of adjoint functors, see \cite[etc]{K1965, EM1965}.
The main purpose of this paper is to show that power semigroups of groups, and more generally reduced finitary power monoids, are naturally instances of convolution in Kleisli categories.
To be precise, for the set category $\Set$, one can define two special triples
$\mathscr P_+ = (\Pplus, \eta, \mu)$ and $\mathscr P_{\mathrm{fin}} = (\Pfin, \eta, \mu)$
and obtain two Kleisli categories $\Kl(\mathscr P_+)$ and $\Kl(\mathscr P_{\mathrm{fin}})$, respectively.
Then the two Kleisli categories have natural monoidal category structures $(\Set, \otimes, \{*\})$,
and, for any two Kleisli morphisms $f, g : X \rightsquigarrow Y$ (say also Kleisli arrows) in $\Hom_{\Kl(\mathscr P_+)}(X, Y)$
(the case for $f,g\in \Hom_{\Kl(\mathscr P_{\mathrm{fin}})}(X, Y)$ is similar),
we have a convolution
\[ f\star g := m \compos_{\mathscr T} (f\otimes g) \compos_{\mathscr T} \Delta: \quad
  \checks{
  \xymatrix{X \ar@{~>}[r]^{\Delta} & X\otimes X \ar@{~>}[r]^{f\otimes g}
  & Y\otimes Y \ar@{~>}[r]^{m} & Y,
  } } \]
which is a Kleisli arrow $X\rightsquigarrow Y$ lying in $\Hom_{\Kl(\mathscr P_+)}(X, Y)$.
Here, $\Delta$ is a comultiplication defined on $X$, and $m$ is a multiplication defines on $\Pplus(Y) :=\{A\subseteq Y: A\ne\varnothing\}$.
Based on the above, we obtain the first main result of this paper.

\begin{theoremAlph}[{Theorem \ref{thm:convolution}, Lemma \ref{lemm:P+}}] \label{thm:A 260822}
For any group $G$, $\Pplus(G)$ and the Kleisli Hom-space $\Hom_{\Kl(\mathscr P_+)}(1,G)$ are monoids $($the multiplication in this Kleisli Hom-space is the convolution ``$\star$'' as above$)$, and we have an isomorphism
\[ \Hom_{\Kl(\mathscr P_+)}(1,G) \cong \Pplus(G) \]
of monoids. 
The same assertion holds with $\mathscr P_+$ replaced by $\mathscr P_{\fin}$.
\end{theoremAlph}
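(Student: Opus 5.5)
We make the data explicit, identify the Hom-set with $\Pplus(G)$ as sets, and then check that the convolution matches the setwise product.

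\textbf{Setup.} Here $\Pplus$ is the non-empty powerset monad on $\Set$. Its unit is $\eta_X(x)=\{x\}$ and its multiplication is $\mu_X(\mathcal A)=\bigcup\mathcal A$. A Kleisli arrow $f:X\rightsquigarrow Y$ is a map $X\to\Pplus(Y)$. Kleisli composition is
\[ (g\compos_{\mathscr T} f)(x)=\bigcup_{y\in f(x)}g(y). \]
The monoidal product is $\otimes=\times$, with $(f\otimes g)(x,x')=f(x)\times g(x')$. This uses the lax monoidal structure of $\Pplus$: a product of non-empty sets is non-empty, and a product of finite sets is finite, which handles $\Pfin$.

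We take $\Delta:X\rightsquigarrow X\otimes X$ to be $\eta\circ$(diagonal), so $\Delta(x)=\{(x,x)\}$. For the group $G$ we take $m:G\otimes G\rightsquigarrow G$ to be $\eta\circ$(group multiplication), so $m(a,b)=\{ab\}$.

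\textbf{Step 1: bijection of sets.} Since $1=\{*\}$, a Kleisli arrow $1\rightsquigarrow G$ is just a choice of an element $f(*)\in\Pplus(G)$. So $\Phi:f\mapsto f(*)$ is a bijection $\Hom_{\Kl(\mathscr P_+)}(1,G)\to\Pplus(G)$.

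\textbf{Step 2: the convolution is the setwise product.} Unwinding Kleisli composition, $(f\otimes g)\compos_{\mathscr T}\Delta$ sends $*$ to $f(*)\times g(*)$. Composing with $m$ then gives
\[ (f\star g)(*)=\bigcup_{(a,b)\in f(*)\times g(*)}\{ab\}=f(*)\,g(*). \]
Hence $\Phi(f\star g)=\Phi(f)\Phi(g)$.

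\textbf{Step 3: monoid structure.} Associativity of $\star$ could be derived abstractly from coassociativity of $\Delta$, associativity of $m$, and functoriality of $\otimes$ on $\Kl$. It is simpler to transport it through $\Phi$: we have $(AB)C=\{abc\}=A(BC)$ by associativity in $G$. The unit of $\star$ is the Kleisli arrow $e=\eta_G\circ 1_G$, i.e. $*\mapsto\{1_G\}$. Its image $\{1_G\}$ is the identity of $\Pplus(G)$, since $\{1_G\}A=A=A\{1_G\}$. Thus both sides are monoids and $\Phi$ is an isomorphism of monoids. Only the monoid axioms of $G$ are used, so the argument works for any monoid $G$.

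\textbf{The $\Pfin$ case.} The statement is ambiguous here, so we fix a reading. If $\Pfin$ means non-empty finite subsets, the identical argument works, because $f(*)g(*)$ is finite whenever both factors are. If $\Pfin$ allows $\varnothing$, then $\Hom(1,G)\cong\Pfin(G)$ still holds, with $\varnothing$ an absorbing element, since $\varnothing\, A=\varnothing$. The computation is unchanged.

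\textbf{Main obstacle.} The real work is pinning down the paper's choices for $\Delta$, $m$, and the Kleisli tensor, and confirming that $\otimes$ is a bifunctor on $\Kl$. That requires the monad to be commutative, which holds for the powerset monads, where the double strength is the cartesian product. Once the definitions are fixed as above, each verification is a one-line computation.
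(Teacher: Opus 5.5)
Your proposal is correct and follows the same route as the paper. Both identify Kleisli points $1\rightsquigarrow G$ with non-empty subsets by evaluating at $*$ (Lemma~\ref{lemm:260820}), compute $m\compos(f_A\otimes f_B)\compos\Delta$ as $A\times B\mapsto\{ab\}$, transport the monoid structure, and use finiteness of products and images for the $\Pfin$ case. You are somewhat more careful than the paper: you lift $\Delta$ and $m$ to Kleisli arrows via $\eta$, note that the Kleisli tensor relies on the monad being commutative, and name the unit $\{1_G\}$ explicitly.
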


Here, the object $1$ in the Kleisli category $\Kl(\mathscr P_+)$ (or $\Kl(\mathscr P_{\fin})$) is a one-point set in $\Set$.
The categorical perspective given in Theorem \ref{thm:A 260822} makes the multiplication of subsets appear not as an ad hoc extension of the original multiplication, but as the canonical convolution arising from the monoidal structure of the Kleisli category.
Now, we have a natural question that what is $\Pplus(q)$ (or $\Pfin(q)$) for homomorphism $q:F\to G$ of groups $F$ and $G$?
The following theorem provides an answer for this question, and shows that: each Kleisli arrow in $\Hom_{\Kl(\mathscr P_+)}(1, G)$ can be lifted to a Kleisli arrow in $\Hom_{\Kl(\mathscr P_+)}(1, F)$; and this lift is convolution-preserving.

\begin{theoremAlph}[{Theorem \ref{thm:base-change}}] \label{thm:B 260822}
Every surjective homomorphism $q:F\to G$ of groups induces an embedding
\[ q^{\dag}: \Hom_{\Kl(\mathscr P_+)}(1, G) \hookrightarrow \Hom_{\Kl(\mathscr P_+)}(1, F) \]
sends each Kleisli point $f_A: 1=\{*\} \rightsquigarrow G$ $($defined by $f_A(*)=A\subseteq G)$ in $\Hom_{\Kl(\mathscr P_+)}(1, G)$ to the Kleisli point $f_{q^{-1}(A)}: 1 \rightsquigarrow F$ $($satisfying $f_{q^{-1}(A)}(*)={q^{-1}(A)}\subseteq F)$ in $\Hom_{\Kl(\mathscr P_+)}(1, F)$.
\end{theoremAlph}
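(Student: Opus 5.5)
We reduce Theorem \ref{thm:B 260822} to a statement about subsets, using the identification $\Hom_{\Kl(\mathscr P_+)}(1,G)\cong\Pplus(G)$, $f_A\mapsto A$, of Theorem \ref{thm:A 260822}, and likewise for $F$. Under this identification $f_A\star f_B=f_{AB}$ and the identity element is $f_{\{1_G\}}$. It therefore suffices to show that the map
\[ q^{-1}:\Pplus(G)\to\Pplus(F),\qquad A\mapsto q^{-1}(A), \]
is well defined, injective, and compatible with convolution. By compatibility we mean that $q^{-1}(AB)=q^{-1}(A)\,q^{-1}(B)$.

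\textbf{Well-definedness and injectivity.} Since $q$ is surjective, $q^{-1}(A)\neq\varnothing$ whenever $A\neq\varnothing$, so $q^\dag$ indeed lands in $\Pplus(F)$. Surjectivity also gives $q(q^{-1}(A))=A$, so $q^{-1}(A)=q^{-1}(B)$ forces $A=B$. Hence $q^\dag$ is injective.

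\textbf{Multiplicativity.} For $\supseteq$: $q(xy)=q(x)q(y)\in AB$ whenever $x\in q^{-1}(A)$ and $y\in q^{-1}(B)$. For $\subseteq$: let $z\in F$ with $q(z)=ab$, where $a\in A$ and $b\in B$. By surjectivity choose $x$ with $q(x)=a$, and put $y:=x^{-1}z$. Then $q(y)=a^{-1}ab=b$, so $z=xy\in q^{-1}(A)q^{-1}(B)$. This step is where both the group structure and surjectivity are genuinely used, and it is the only non-formal point.

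To phrase the result Kleisli-categorically, we may instead note that $q^\dag$ is precomposition-free: it is Kleisli postcomposition with the Kleisli arrow $q^\ast:G\rightsquigarrow F$, $g\mapsto q^{-1}(g)$. This arrow is nonempty-valued by surjectivity. Multiplicativity then amounts to $q^\ast\compos m_G=m_F\compos(q^\ast\otimes q^\ast)$, which is exactly the identity verified above.

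\textbf{Identity element.} The unit $\{1_G\}$ is sent to $\ker q$, which is an idempotent but is not the unit $\{1_F\}$ unless $q$ is injective. So $q^\dag$ is an embedding of semigroups; equivalently, it is a monoid embedding onto the monoid with identity $\ker q$. We would state this caveat explicitly when asserting that $q^\dag$ preserves convolution.
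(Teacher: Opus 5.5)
Your proposal is correct and follows essentially the same route as the paper. The paper proves the same subset-level facts as Proposition \ref{prop:base-change}: well-definedness, the identity $q^{-1}(AB)=q^{-1}(A)\,q^{-1}(B)$ via lifting $a$ to $x$ and setting $y=x^{-1}z$, and injectivity via surjectivity. It then transports these along the bijections $\Hom_{\Kl(\mathscr P_+)}(1,-)\cong\Pplus(-)$ of Theorem \ref{thm:convolution}. Your description of $q^\dag$ as Kleisli postcomposition with $q^\ast\colon g\mapsto q^{-1}(g)$, and your remark that $\{1_G\}\mapsto\ker q$ (cf.\ Example \ref{ex:base-change}), are correct extras that the embedding itself does not need.
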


Let $\Setstar$ be the \defines{pointed set category}, which can be seen as a subcategory of $\Set$ (up to some embedding),
whose object is a pairs $(X,*)$, say pointed set, of a set $X$ and an element $*\in X$,
and whose morphism $(X,*) \to (Y,\circ)$ is a map $f: X\to Y$ with $f(*)=\circ$.
Then we can define $\mathcal P_{\fin,*}(X,*):= \bigl(\{A\subseteq X:A\text{ is finite and }*\in A\},\{*\}\bigr)$ for each pointed set $(X,*)$.
We can naturally induce a triple $\mathscr P_{\fin}=(\mathcal P_{\fin,-}, \eta, \mu)$ over $\Setstar$ by using the monad $\mathscr P_{+,\fin}$ given in the previous text. In this paper, we show the following important fact.

\begin{theoremAlph}[{Theorem \ref{thm:Pfin-monad}}]
The triple $\mathscr P_{\fin}=(\mathcal P_{\fin,-}, \eta, \mu)$ is a monad over $\Setstar$.
\end{theoremAlph}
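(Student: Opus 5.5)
We have to specify the data and then check the monad axioms directly. On a pointed set $(X,*)$ put $T(X,*)=\mathcal P_{\fin,*}(X,*)$, with base point $\{*\}$. For a pointed map $f:(X,*)\to(Y,\circ)$ let $T f(A)=f(A)$. The unit is $\eta_X(x)=\{*,x\}$; the bare singleton $\{x\}$ would not contain $*$, so we force the base point in, consistent with the "reduced" convention. The multiplication is $\mu_X(\mathcal A)=\bigcup_{A\in\mathcal A}A$ for a finite family $\mathcal A$ of finite subsets each containing $*$, with $\{*\}\in\mathcal A$. The plan is to show these are pointed maps, that $T$ is a functor, that $\eta,\mu$ are natural, and that the three monad laws hold.

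\textbf{Well-definedness and functoriality.} If $A$ is finite and contains $*$, then $f(A)$ is finite and contains $f(*)=\circ$. Moreover $Tf(\{*\})=\{\circ\}$, so $Tf$ is pointed. Functoriality, $T(gf)=Tg\,Tf$ and $T\id=\id$, is the usual property of direct images. For the unit, $\eta_X(*)=\{*\}$, so $\eta_X$ is pointed. For the multiplication, a finite union of finite sets containing $*$ is finite and contains $*$, and $\mu_X(\{\{*\}\})=\{*\}$, so $\mu_X$ is pointed.

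\textbf{Naturality.} For $\eta$ we compute $Tf(\eta_X(x))=\{f(*),f(x)\}=\{\circ,f(x)\}=\eta_Y(f(x))$; this is exactly where pointedness of $f$ is used. For $\mu$, the identity $f\bigl(\bigcup\mathcal A\bigr)=\bigcup_{A\in\mathcal A}f(A)$ gives $Tf\,\mu_X=\mu_Y\,TTf$.

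\textbf{Monad laws.} Associativity $\mu\circ T\mu=\mu\circ\mu_{T}$ holds because both sides send a family of families to the union of all members of all members. The left unit law needs $\mu_X(\eta_{TX}(A))=\mu_X(\{\{*\},A\})=\{*\}\cup A=A$, which holds because $*\in A$. The right unit law needs $\mu_X(T\eta_X(A))=\bigcup_{a\in A}\{*,a\}=A\cup\{*\}=A$, which holds because $A\neq\varnothing$ and $*\in A$.

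\textbf{Main obstacle.} The main point is the unit laws, where the extra base point introduced by $\eta$ must be absorbed. This works exactly because every element of $T(X,*)$ already contains $*$, and this is the reason for the reduced convention. If the paper intends $\eta$ and $\mu$ to be induced from the ordinary finite powerset monad, I would first check that the subfunctor inclusion commutes with them up to adjoining $*$. That comparison should be cleaner than verifying the laws directly.
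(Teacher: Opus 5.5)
Your proposal is correct and follows the same direct verification as the paper: direct images are well defined on finite subsets containing the basepoint, naturality of $\eta$ uses $f(*)=\circ$, naturality of $\mu$ is the fact that direct image commutes with unions, both unit laws reduce to $A\cup\{*\}=A$, and associativity is a union of unions. You also check that $Tf$, $\eta_{(X,*)}$ and $\mu_{(X,*)}$ preserve basepoints, which the paper leaves implicit; this confirms that all the maps really are morphisms of $\Setstar$.
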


This theorem shows that we can build a Kleisli category $\Kl(\mathscr P_{\fin})$ over $\Setstar$.
Another purpose of this paper is to discuss numerical monoids and their reduced finitary power monoids under this Kleisli category $\Kl(\mathscr P_{\fin})$, and we can study the rigidity of the automorphism group of power monoids.
Here, for a monoid $H = (H,\cdot)$, its \defines{reduced finitary power monoid} is defined as
$\Pfinred(H):=\{A\subseteq H:A\text{ is finite and }1\in A\}$.
The following theorem provides a description of the reduced finitary power monoid of a monoid $H$.

\begin{theoremAlph}[{Theorem \ref{thm:Kleisli=Pfin}}]
For every monoid $H$, pointed by its identity element $1_H$, there is a natural bijection
\[ \theta_{\fin,H}: \Hom_{\Kl(\mathscr P_{\fin,*})}(\mathbb{Z}/2\mathbb{Z},H) \to \mathcal P_{\fin,1_H}(H)\]
of monoids.
\end{theoremAlph}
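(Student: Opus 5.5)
The bijection is evaluation at the non-base point. Write $\mathbb Z/2\mathbb Z=\{0,1\}$ as the pointed set with base point $0$. A Kleisli arrow $f:\mathbb Z/2\mathbb Z\rightsquigarrow H$ in $\Kl(\mathscr P_{\fin,*})$ is a pointed map $\mathbb Z/2\mathbb Z\to \mathcal P_{\fin,*}(H,1_H)$. Being pointed forces $f(0)=\{1_H\}$, which is the base point of $\mathcal P_{\fin,*}(H)$. The value $f(1)$ is then an arbitrary finite subset of $H$ containing $1_H$. So we define
\[ \theta_{\fin,H}(f):=f(1)\in \mathcal P_{\fin,1_H}(H). \]
Conversely, for $A\in\mathcal P_{\fin,1_H}(H)$ we define $f_A(0)=\{1_H\}$ and $f_A(1)=A$. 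These two assignments are clearly mutually inverse, so $\theta_{\fin,H}$ is a bijection of sets. This step is routine.

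Next I will check that $\theta_{\fin,H}$ respects the monoid structures. The multiplication on the Hom-set is the convolution $f\star g=m\compos_{\mathscr T}(f\otimes g)\compos_{\mathscr T}\Delta$, transported to $\Setstar$.
\begin{itemize}
  \item The comultiplication $\Delta$ on $\mathbb Z/2\mathbb Z$ is the group-like diagonal $x\mapsto\{(x,x)\}$, viewed as a Kleisli arrow via $\eta$.
  \item The multiplication $m:H\otimes H\rightsquigarrow H$ is $(a,b)\mapsto\{ab\}$, also via $\eta$.
  \item $f\otimes g$ sends $(x,y)$ to $f(x)\times g(y)$.
\end{itemize}
Unwinding the Kleisli composition with $\mu$ (union) gives $(f\star g)(x)=f(x)g(x)$, exactly as in the proof of Theorem \ref{thm:convolution} for $\mathscr P_+$. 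Hence
\[ (f\star g)(1)=f(1)g(1), \qquad (f\star g)(0)=\{1_H\}\{1_H\}=\{1_H\}, \]
so $f\star g$ is again pointed and $\theta_{\fin,H}(f\star g)=\theta_{\fin,H}(f)\,\theta_{\fin,H}(g)$. The unit of the convolution is $\eta_H\circ u$, where $u$ is the constant map to $1_H$. It corresponds to $f_{\{1_H\}}$ and is sent to $\{1_H\}$, the identity of $\mathcal P_{\fin,1_H}(H)$. Associativity on the Hom side then follows from the bijection, or alternatively from Theorem \ref{thm:convolution}.

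Finally, naturality in $H$. For a monoid homomorphism $\varphi:H\to K$ (automatically pointed), postcomposition with $\eta_K\circ\varphi$ sends $f$ to the arrow $x\mapsto\varphi(f(x))$. Applying $\theta$ gives $\varphi(f(1))$, which is $\mathcal P_{\fin,1}(\varphi)$ applied to $\theta_{\fin,H}(f)$. So the square commutes.

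The main obstacle I expect is making the monoidal structure on $\Kl(\mathscr P_{\fin,*})$ over $\Setstar$ precise. I need to fix which tensor is used: the product with base point $(*,*)$, or a smash product. I also need to check that $\Delta$, $m$ and $f\otimes g$ are all genuinely pointed Kleisli arrows. For this I would rely on Theorem \ref{thm:Pfin-monad}, and I would verify directly that $f(0)\times g(0)=\{(1_H,1_H)\}$, so that the base point is preserved at each stage. Once that bookkeeping is in place, the pointwise-product formula for $\star$ does all the work.
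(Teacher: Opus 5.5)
Your proposal is correct and follows the paper's route: $\theta_{\fin,H}$ is evaluation at the non-base point $1$, pointedness forces $f(0)=\{1_H\}$, and multiplicativity comes from the pointwise formula $(f\star g)(i)=f(i)\star g(i)$. The paper simply takes this formula as the definition of the operation in Lemma~\ref{lemm:Hom-monoid}; your derivation of it from the Kleisli convolution, your transport of associativity along the bijection, and your naturality check are extra detail the paper omits.

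One small correction: over $\Setstar$ the unit of $\mathscr P_{\fin}$ is $\eta_{(X,*)}(x)=\{x,*\}$, so $\Delta$ and $m$ are really $x\mapsto\{(x,x),(0,0)\}$ and $(a,b)\mapsto\{ab,1_H\}$. Singletons such as $\{(1,1)\}$ miss the base point and are not elements of $\mathcal P_{\fin,*}$. The extra base points are absorbed, however, since $(1_H,1_H)\in f(x)\times g(x)$ and $1_H\in f(x)g(x)$, so your formula for $f\star g$ stands.
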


Parallel to Theorem \ref{thm:B 260822}, we have the following theorem.

\begin{theoremAlph}[{Theorem \ref{thm:rigidity-transfer}}]
Let $q:F \twoheadrightarrow \mathbb Z$ be a surjective group homomorphism. Then, for any numerical monoid $S$, the following statements hold.
\begin{enumerate}[label={\rm(\arabic*)}]
  \item The map
    \begin{center}
      $\rho_q: \mathcal P_{\fin,0}(S) \cong \Hom_{\Kl(\mathscr P_{\fin,*})}(\mathbb{Z}/2\mathbb{Z},S) \to \Pplus(F) \cong \Hom_{\Kl(\mathscr P_{\fin,*})}(\mathbb{Z}/2\mathbb{Z},F)$, $\rho_q(A):=q^{-1}(A)$
    \end{center}
    is an injective semigroup homomorphism.
  \item Conjugation by $\rho_q$ induces an isomorphism $\Aut(M_S) \to \Aut(\rho_q(M_S))$ of groups.
  \item For all $A,B,X\in M_S$, $A\star X=B\star X$ if and only if $\rho_q(A)\star\rho_q(X)=\rho_q(B)\star\rho_q(X)$.
\end{enumerate}
\end{theoremAlph}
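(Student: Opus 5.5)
We read $M_S$ as $\mathcal P_{\fin,0}(S)$, written additively, so $A\star X=A+X$. The map $\rho_q$ sends $A$ to $q^{-1}(A)=\bigcup_{a\in A}(a'+K)$, where $K=\Ker q$ and $a'$ is any lift of $a$. The target of $\rho_q$ is therefore $\Pplus(F)$ with the subset product, and $\rho_q(A)\ne\varnothing$ because $q$ is surjective and $0\in A$. The plan is to prove (1) directly, to get (2) formally from (1), and to get (3) from (1) together with injectivity.

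For (1) we argue as in Theorem~\ref{thm:B 260822}. Surjectivity of $q$ gives $q(q^{-1}(A))=A$, and this yields injectivity at once. For the homomorphism property we show $q^{-1}(A)\,q^{-1}(B)=q^{-1}(A+B)$.
\begin{itemize}
\item For $\subseteq$: since $q$ is a homomorphism, $q(xy)=q(x)+q(y)\in A+B$.
\item For $\supseteq$: if $q(z)=a+b$, choose $x\in q^{-1}(a)$ and put $y=x^{-1}z$. Then $q(y)=b$ and $z=xy$.
\end{itemize}
This is just the argument behind Theorem~\ref{thm:base-change}, now applied to subsets of $S\subseteq\mathbb Z$ viewed inside $\mathcal P_+(\mathbb Z)$. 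The identifications with Kleisli Hom-spaces are the bijections of Theorem~\ref{thm:A 260822} and Theorem~\ref{thm:Kleisli=Pfin}, and they are only bookkeeping here.

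For (2), $\rho_q$ is an isomorphism of semigroups $M_S\to\rho_q(M_S)$ by (1). We define $\Phi(\sigma)=\rho_q\compos\sigma\compos\rho_q^{-1}$ on $\rho_q(M_S)$. Each $\Phi(\sigma)$ is an automorphism, and $\Phi$ is a group homomorphism with inverse $\tau\mapsto\rho_q^{-1}\compos\tau\compos\rho_q$.

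One point needs care. $\Aut(\rho_q(M_S))$ should mean automorphisms of $\rho_q(M_S)$ as a monoid, or as a semigroup on both sides. The identity $\{0\}$ maps to $K=\rho_q(\{0\})$, and $K$ is the identity of $\rho_q(M_S)$ because $K\,q^{-1}(A)=q^{-1}(A)$. So monoid automorphisms correspond to monoid automorphisms, and it suffices to note that semigroup automorphisms of a monoid automatically fix the identity.

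For (3), the forward direction is immediate: apply $\rho_q$ to $A+X=B+X$ and use (1). For the converse, $\rho_q(A)\rho_q(X)=\rho_q(B)\rho_q(X)$ gives $\rho_q(A+X)=\rho_q(B+X)$ by (1), and injectivity gives $A+X=B+X$.

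The only real obstacle is interpretive: matching the two monoid structures and the meaning of $\Aut$. The mathematics reduces to the identity $q^{-1}(A)q^{-1}(B)=q^{-1}(A+B)$ for surjective $q$, and it does not need finiteness or any property of numerical monoids.
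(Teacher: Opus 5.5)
Your proposal is correct and is essentially the paper's proof: (1) follows from the identity $q^{-1}(A)\,q^{-1}(B)=q^{-1}(A+B)$ of Proposition~\ref{prop:base-change}, applied with $G=\mathbb Z$, with injectivity coming from $q(q^{-1}(A))=A$; (2) follows by conjugating with the semigroup isomorphism $\rho_q\colon M_S\to\rho_q(M_S)$ as in Remark~\ref{rem:rigidity-isomorphism}; and (3) is ``preserved because $\rho_q$ is a homomorphism, reflected because it is injective.'' Your two extra observations are sound refinements that the paper leaves implicit: that $\Ker q=\rho_q(\{0\})$ is the identity of the image, and that the relevant Kleisli model of $\Pplus(F)$ is $\Hom_{\Kl(\mathscr P_+)}(1,F)$ from Theorem~\ref{thm:A 260822}, not a reduced finite Hom-space out of $\mathbb Z/2\mathbb Z$, whose elements are finite sets.
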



Finally, we consider an application to the Tringali--Yan conjecture.
Tringali and Yan conjectured that $\mathcal P_{\fin,0}(S)$ is rigid whenever $S$ is a numerical monoid properly contained in $\mathbb N$, see \cite[Section 4]{TY2025}.
This conjecture was proved by Bhowmik and Tringali in \cite[Theorem 1.2]{BhowmikTringali}.
Their proof is shorter than the Kleisli proof given in this paper. By using the isomorphism
\[ \Hom_{\Kl(\mathscr P_{\fin})}(\mathbb Z/2\mathbb Z,S) \cong \mathcal P_{\fin,0}(S), \]
we restate the relevant arguments in terms of Kleisli arrows and Kleisli convolution.
In particular, we prove that
\[ \Aut( \Hom_{\Kl(\mathscr P_{\fin})}(\mathbb Z/2\mathbb Z,S))\cong\{1\} \]
for every numerical monoid $S$ properly contained in $\mathbb N$, see Theorem \ref{thm:direct-Kleisli-rigidity}.
The purpose of this application is to show that \emph{Kleisli convolution representations may provide a useful categorical viewpoint for problems in semigroup theory}.


\section{Powerset monads and convolution} \label{sect:convol}

\textsl{Throughout the paper, a semigroup need not have an identity, whereas a monoid
does.  For a set $X$, let $\Pplus(X):=\{A\subseteq X:A\ne\varnothing\}$
and let $\Pfin(X):=\{A\subseteq X:0<|A|<\infty\}$.
The symbols $\Pplus$ and $\Pfin$ will retain these meanings throughout.
In particular, $\Pfin(X)$ never includes the empty set.}

\subsection{Powerset monads and their Kleisli categories}

\begin{definition}\label{def:Klcat}\rm
A \defines{monad} over a category $\Cat$ is a triple $\mathscr T= (T,\eta,\mu)$ defined as follows.
\begin{enumerate}[label=(M\arabic*)]
  \item $T$ is a functor $T: \Cat \to \Cat$,
    $\mu$ is a natural transformation $\mu: T \compos T \to T$,
    and $\eta$ is a natural transformation $\eta: \id_{\Cat} \to T$ such that
    \[\mu_X \compos T(\eta_X) = \id_{TX},
      \quad \mu_X\compos\eta_{TX} = \id_{TX}
      \quad \text{and}\quad \mu_X\compos T\mu_X = \mu_X\compos \mu_{TX}\]
    hold for all object $X \in \Obj(\Cat)$.
    \label{M1}
  \item The elements in $\mathscr T$ are morphisms of the form $a: X \to TY$ which are called \defines{Kleisli arrows}.
    \label{M2}
  \item For any two elements $f: X_1\to TX_2$ and $g: X_2 \to TX_3$,
    the multiplication is defined by the \defines{Kleisli composition}
    \[g \compos_{\mathscr T} f := \Big(\xymatrix{
     X_1 \ar[r]^{f} & TX_2 \ar[r]^{Tg} & T\compos T ~ X_3 \ar[r]^{\mu_{X_3}} & T X_3
    }\Big). \]
    \label{M3}
  \item The identity morphism $\id_X$ in $\mathscr T$ is $\eta_X$.
    \label{M4}
\end{enumerate}
Furthermore, given a monad $\mathscr T$, we define the \defines{Kleisli category} of $\Cat$, denoted by $\Kl(\mathscr T)$,
is a category whose objects and morphisms are given by
\begin{itemize}
  \item $\Obj(\Cat_T) := \Obj(\Cat)$,
  \item $\Hom_{\Cat_T}(X,Y) := \Hom_{\Cat}(X,TY)$,
\end{itemize}
and the composition of two morphisms is defined by Kleisli composition.
Each morphism in Kleisli category is called a \defines{Kleisli morphism}.
\end{definition}

Let $\Set$ be a set category whose objects are sets and morphisms are maps.
Let
\[ \mathscr P_+
= (\Pplus,\, \eta:\id_{\Set}\to\Pplus, \, \mu:\Pplus\checks{\times}\Pplus\to \Pplus) \]
be the triple defined by:
\begin{enumerate}[label=($\mathscr{P}$\arabic*)]
  \item for any set $X$ in $\Set$ and any $x\in X$, $\eta_X(x)=\{x\}$ is a set containing only one element $x$;
    \label{P1}
  \item for any $\mathcal A \in \Pplus(\Pplus(X))$, $\mu_X(\mathcal A)$ is the union $\bigcup\limits_{A\in\mathcal A}A$ $(\in \Pplus(X))$.
    \label{P2}
\end{enumerate}
Here, for a map $f:X\to Y$, we define $\Pplus(f)$ is a correspondence
\[ \Pplus(f): \Pplus(X) \to \Pplus(Y) \]
sending each $A$ to the \defines{direct image} $f[A]:=\{f(a)\in Y : a\in A\} \in \Pplus(Y)$.

\begin{lemma}\label{lemm:non-empty powerset monad}
The triple $\mathscr P_+=(\Pplus,\eta,\mu)$ is a monad over $\Set$.
\end{lemma}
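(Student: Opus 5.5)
The plan is to verify the axioms of \ref{M1} directly, elementwise, after first checking that all the data are well defined. First I check that $\Pplus$ is a functor $\Set\to\Set$. For a map $f:X\to Y$ and non-empty $A\subseteq X$, the direct image $f[A]$ is again non-empty, so $\Pplus(f)$ lands in $\Pplus(Y)$. Moreover $\id_X[A]=A$ and $(g\compos f)[A]=g[f[A]]$, which gives preservation of identities and composition. I also check that $\eta_X$ and $\mu_X$ are well defined:
\begin{itemize}
  \item $\{x\}$ is non-empty, so $\eta_X(x)\in\Pplus(X)$;
  \item if $\mathcal A\in\Pplus(\Pplus(X))$, then $\mathcal A$ has at least one member and that member is non-empty, so $\bigcup_{A\in\mathcal A}A$ is non-empty and $\mu_X(\mathcal A)\in\Pplus(X)$.
\end{itemize}
This non-emptiness check is the only point where the restriction to non-empty subsets matters. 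It is also the only mildly delicate step, since the empty family would otherwise break well-definedness of $\mu$.

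Next I verify naturality.
\begin{itemize}
  \item For $\eta$: $f[\{x\}]=\{f(x)\}$, so $\Pplus(f)\compos\eta_X=\eta_Y\compos f$.
  \item For $\mu$: for $\mathcal A\in\Pplus(\Pplus(X))$, the left side is $f\bigl[\bigcup_{A\in\mathcal A}A\bigr]$ and the right side is $\bigcup_{B\in \Pplus(\Pplus(f))(\mathcal A)}B=\bigcup_{A\in\mathcal A}f[A]$. These agree because direct images commute with unions. This gives $\Pplus(f)\compos\mu_X=\mu_Y\compos\Pplus(\Pplus(f))$.
\end{itemize}

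Finally, I check the three identities of \ref{M1} on an arbitrary element.
\begin{itemize}
  \item For $A\in\Pplus(X)$: $\mu_X(\Pplus(\eta_X)(A))=\bigcup_{a\in A}\{a\}=A$.
  \item For $A\in\Pplus(X)$: $\mu_X(\eta_{\Pplus X}(A))=\bigcup\{A\}=A$.
  \item For associativity, take $\mathfrak A\in\Pplus(\Pplus(\Pplus(X)))$. Then $\mu_X(\Pplus(\mu_X)(\mathfrak A))=\bigcup_{\mathcal A\in\mathfrak A}\bigcup_{A\in\mathcal A}A$. Also $\mu_X(\mu_{\Pplus X}(\mathfrak A))=\bigcup_{A\in\bigcup\mathfrak A}A$. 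Both equal the set of $x$ with $x\in A\in\mathcal A\in\mathfrak A$ for some $A,\mathcal A$. This is just associativity of unions.
\end{itemize}
Together these establish that $\mathscr P_+$ is a monad, so I expect no real obstacle beyond the bookkeeping of non-emptiness.
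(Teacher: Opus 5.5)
Your proposal is correct and follows essentially the same route as the paper: functoriality via direct images, naturality of $\eta$ and $\mu$, and then an elementwise check of the two unit laws and associativity as identities about unions. You are slightly more complete than the paper. It dismisses the naturality of $\mu$ as ``similar'' and never explicitly checks that $\mu_X(\mathcal A)$ is non-empty, whereas you verify both.
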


\begin{proof}
First of all, for any $X\in\Obj(\Set)$, we have $\Pplus(X)$ is a set.
For the identity map $\id_X:X\to X$, and for any non-empty $A\subseteq X$,
we have $\Pplus(\id_X)(A)=\id_X[A]=A$, so $\Pplus(\id_X)=\id_{\Pplus(X)}$.
For composable maps $f:X\to Y$ and $g:Y\to Z$, and any $A\in\Pplus(X)$,
\[
\Pplus(g\compos f)(A)=(g\compos f)[A]=g[f[A]]=\Pplus(g)\bigl(\Pplus(f)(A)\bigr)
=(\Pplus(g)\compos\Pplus(f))(A).
\]
Thus $\Pplus(g\compos f)=\Pplus(g)\compos\Pplus(f)$. Hence $\Pplus$ is an endofunctor on $\Set$.

Next, we show that $\eta:\id_{\Set}\Rightarrow\Pplus$ and $\mu:\Pplus\circ\Pplus\Rightarrow\Pplus$ are natural transformations.
For any map $f:X\to Y$, we need $\Pplus(f)\compos\eta_X=\eta_Y\compos f$.
For any $x\in X$, $\Pplus(f)(\eta_X(x))=\Pplus(f)(\{x\})=\{f(x)\}=\eta_Y(f(x))$.
Thus the naturality of $\eta$ holds.
One can check that $\mu$ is a natural transformation in a similar way.

Third, we show that $\mu_X\compos\eta_{\Pplus(X)}=\id_{\Pplus(X)}$ and $\mu_X\compos\Pplus(\eta_X)=\id_{\Pplus(X)}$.
Indeed, for any $A\in\Pplus(X)$, we have $\mu_X(\eta_{\Pplus(X)}(A))=\mu_X(\{A\})=\bigcup\limits_{B\in\{A\}}B=A$, so
\begin{align}\label{eq:non-empty powerset monad 1}
  \mu_X\compos\eta_{\Pplus(X)}=\id_{\Pplus(X)}.
\end{align}
Also, $\Pplus(\eta_X)(A)=\eta_X[A]=\{\{x\}:x\in A\}$,
hence we obtain $\mu_X(\Pplus(\eta_X)(A))=\bigcup\limits_{x\in A}\{x\}=A$. Therefore,
\begin{align}\label{eq:non-empty powerset monad 2}
  \mu_X\compos\Pplus(\eta_X)=\id_{\Pplus(X)}
\end{align}
Let $\mathfrak A\in\Pplus(\Pplus(\Pplus(X)))$, i.e. $\mathfrak A$ is a non-empty collection of non-empty collections of non-empty subsets of $X$.
Then $  \Pplus(\mu_X)(\mathfrak A)
= \{\mu_X(\mathcal A):\mathcal A\in\mathfrak A\}
= \Big\{\bigcup\limits_{A\in\mathcal A}A:\mathcal A\in\mathfrak A \Big\}.$
Thus
\[ \mu_X\bigl(\Pplus(\mu_X)(\mathfrak A)\bigr)
 = \bigcup_{\mathcal A\in\mathfrak A}
   \bigg(\bigcup_{A\in\mathcal A}A\bigg). \]
On the other hand, we have $\mu_{\Pplus(X)}(\mathfrak A)=\bigcup\limits_{\mathcal A\in\mathfrak A}\mathcal A$,
then
\[ \mu_X\bigl(\mu_{\Pplus(X)}(\mathfrak A)\bigr)
 = \mu_X\bigg(\bigcup_{\mathcal A\in\mathfrak A}\mathcal A\bigg)
 = \bigcup_{B\in\bigcup\limits_{\mathcal A\in\mathfrak A}\mathcal A}B. \]
Note that the elements $B$ of $\bigcup\limits_{\mathcal A\in\mathfrak A}\mathcal A$ are exactly the sets $A$ that belong to some $\mathcal A\in\mathfrak A$. Hence
\[
\bigcup_{B\in\bigcup\limits_{\mathcal A\in\mathfrak A}\mathcal A}B
=\bigcup_{\mathcal A\in\mathfrak A}\bigcup_{A\in\mathcal A}A.
\]
Therefore the two expressions coincide, proving
\begin{align}\label{eq:non-empty powerset monad 3}
  \mu_X\compos\Pplus(\mu_X)=\mu_X\compos\mu_{\Pplus(X)}.
\end{align}
By \eqref{eq:non-empty powerset monad 1}, \eqref{eq:non-empty powerset monad 2}
and \eqref{eq:non-empty powerset monad 3}, we obtain \ref{M1}.

We have verified only \ref{M1}, because \ref{M2}--\ref{M4} are not axioms requiring proof.
Clause \ref{M2} merely introduces the terminology ``Kleisli arrows'' for morphisms $X\to TY$;
clause \ref{M3} defines the Kleisli composition rule using the already-established $\mu$;
and clause \ref{M4} defines the identity morphism using the already-established $\eta$. These are constructions that become available once the monad structure from \ref{M1} has been proven.
They are not additional conditions that $\mathscr P_+$ must satisfy.
\end{proof}

\begin{definition} \rm
The triple $\mathscr P_+=(\Pplus,\eta,\mu)$ given in Lemma \ref{lemm:non-empty powerset monad} is called a \defines{$($non-empty$)$ powerset monad} on $\Set$.
\end{definition}

The unit laws and the associativity law of the monad are respectively the identities
\[ \bigcup_{x\in A}\{x\}=A,
 \quad
 \bigcup\{A\}=A,
 \quad
 \bigcup_{\mathcal A\in\mathfrak A}\bigcup_{A\in\mathcal A}A = \bigcup_{A\in\bigcup\mathfrak A}A, \]
where $A\in\Pplus(X)$, $\mathcal A \in \Pplus(\Pplus(X))$, and $\mathfrak A \in \Pplus(\Pplus(\Pplus(X)))$ are arbitrary.
The Kleisli category $\Kl(\mathscr P_+)$ is a category whose objects are sets,
and a Kleisli morphism $f$ from $X$ to $Y$ can be seen as a relation
$f:X\rightsquigarrow Y$ assigns a non-empty subset $f(x)$ of $Y$ to each $x\in X$, i.e.,
\[ X \To{f} \Pplus(Y), \quad x\mapsto f(x)\subseteq Y \]
Then the composite of $f:X\rightsquigarrow Y$ and $g:Y \rightsquigarrow Z$ is
$(g\compos_{\mathscr P_+} f)(x)=\bigcup\limits_{y\in f(x)}g(y)$, i.e.,
\[ X \To{f} \Pplus(Y)
     \To{\Pplus(g)} \Pplus(\Pplus(Z))
     \To{\mu_Z} \Pplus(Z), \]
where for $x\in X$,
\begin{align*}
   \mu_X\compos \Pplus(g) \compos f(x)
& = \mu_X(g[f(x)])
  = \mu_X(\{g(y): y\in f(x)\}) \\
& \mathop{=\!=}\limits^{\text{\ref{P1}}}
    \bigcup_{A\in \{g(y): y\in f(x)\} }A
  = \bigcup\limits_{y\in f(x)}g(y).
\end{align*}

\begin{remark} \rm
Recall that a monad $(T,\eta,\mu)$ is called a \defines{strong monad} if it together with a natural transformation
\[ \operatorname{st}_{X,Y}: X \times TY \longrightarrow T(X \times Y), \]
called the \defines{strength}, satisfying the following \defines{strength axioms}:
\begin{enumerate}[label=(SM\arabic*)]
  \item $\operatorname{st}_{X,Y} \circ (\id_X \times \eta_Y) = \eta_{X \times Y}$,
    \label{SM1}
  \item $\operatorname{st}_{X,Y} \circ (\id_X \times \mu_Y)
       = \mu_{X \times Y} \circ T(\operatorname{st}_{X,Y}) \circ \operatorname{st}_{X, TY}$,
    \label{SM2}
  \item and the associativity condition with the product associator, which in $\Set$ is automatic.
    \label{SM3}
\end{enumerate}
The monad $\mathscr P_+$ is strong for the Cartesian monoidal structure on $\Set$.
Indeed, for any sets $X,Y$, define
\[
\operatorname{st}_{X,Y}: X \times \Pplus(Y) \to \Pplus(X \times Y),
\quad (x,A)\mapsto \{(x,a):a\in A\} ~(\subseteq X \times Y).
\]
We have the following two facts:
\begin{itemize}
  \item For \ref{SM1}, take any $(x,y)\in X\times Y$. Then
\begin{align}\label{eq:fact260819 1}
  \operatorname{st}_{X,Y}(x,\eta_Y(y)) = \operatorname{st}_{X,Y}(x,\{y\}) = \{(x,y)\} = \eta_{X\times Y}(x,y)
\end{align}
  \item For \ref{SM2}, take any $x\in X$ and $\mathcal A\in \Pplus(\Pplus(Y))$. Then we have
\begin{align*}
  \operatorname{st}_{X,Y}(x,\mu_Y(\mathcal A))
= \operatorname{st}_{X,Y}\bigg(x,\bigcup_{A\in\mathcal A}A\bigg)
= \bigg\{(x,y): y\in \bigcup_{A\in\mathcal A}A\bigg\}.
\end{align*}
and
\begin{align*}
  & \big(\mu_{X \times Y} \compos
    \Pplus(\operatorname{st}_{X,Y}) \compos
    \operatorname{st}_{X, \Pplus(Y)}\big)
    (x, \mathcal A) \\
=~& \big(\mu_{X \times Y} \compos
    \Pplus(\operatorname{st}_{X,Y})\big)
    (\{(x,A): A\in\mathcal A\}) \\
=~& \bigcup_{A\in\mathcal A}\{(x,a):a\in A\}
 =  \bigg\{(x,y): y\in \bigcup_{A\in\mathcal A}A \bigg\}.
\end{align*}
Thus both sides of \ref{SM2} are equal.
  \item The condition \ref{SM3} follows similarly and is immediate from the associativity of the Cartesian product of sets.
\end{itemize}
For the purposes of this paper, we do not need the axioms \ref{SM1}--\ref{SM3}.
\end{remark}

\begin{remark} \rm
The powerset monad $\mathscr P_+$ is commutative, i.e., for any $A\in\Pplus(X)$ and $B\in\Pplus(Y)$, the two possible orders of applying the strength followed by $\mu$ both yield the same subset $A\times B \in \Pplus(X\times Y)$.
Thus commutativity here is merely the fact that the Cartesian product of two non-empty subsets is independent of the order in which they are considered.
\end{remark}


\subsection{Convolutions of Kleisli morphisms}

\begin{definition} \rm
Recall that a \defines{monoidal category} consists of a category $\Cat$, a bifunctor
$\otimes:\Cat\times\Cat\to\Cat$ called the \defines{tensor product},
an object $I$ called the \defines{unit},
and three natural isomorphisms
\[
\alpha_{X,Y,Z}:(X\otimes Y)\otimes Z \xrightarrow{\sim} X\otimes(Y\otimes Z),
\quad
\lambda_X:I\otimes X\xrightarrow{\sim}X,
\quad
\rho_X:X\otimes I\xrightarrow{\sim}X,
\]
called the \defines{associator}, \defines{left unitor}, and \defines{right unitor} respectively.
These are required to satisfy the pentagon and triangle identities,
which ensure that the tensor product is associative and unital in a coherent manner.
\end{definition}

\begin{example}\rm\label{examp:set}\rm
The category $\mathbf{Set}$ carries the \defines{Cartesian monoidal structure},
where $\otimes=\times$ is the usual Cartesian product,
$I$ is a one-point set, and the isomorphisms are the evident identifications
\[((x,y),z)\mapsto(x,(y,z)),\quad (*,x)\mapsto x,\quad (x,*)\mapsto x.\]
In this case the coherence axioms \ref{SM1}, \ref{SM2} and \ref{SM3} are automatically satisfied.
\end{example}


\begin{definition}\rm
Let $(\Cat,\otimes,I)$ be a monoidal category.
A \defines{comonoid object} in $\Cat$ is a triple $(B,\Delta,\varepsilon)$ consisting of an object $B$, a morphism $\Delta:B\to B\otimes B$ which is called a \defines{comultiplication}, and a morphism $\varepsilon:B\to I$ which is called a \defines{counit}, such that the following diagrams commute:
\[
\begin{gathered}
\xymatrix@C=2cm{
  B \ar[r]^{\Delta} \ar[d]_{\Delta} & B\otimes B \ar[d]^{\Delta\otimes\id_B} \\
  B\otimes B \ar[r]_{\id_B\otimes\Delta} & B\otimes B\otimes B
}
\quad
\xymatrix{
  I\otimes B \ar@{<-}[r]^{\varepsilon \otimes\id_B} \ar@{<-}[dr]_{\lambda_B^{-1}} & B\otimes B \ar@{<-}[d]^{\Delta} & B \otimes I \ar@{<-}[l]_{\id_B\otimes \varepsilon } \ar@{<-}[dl]^{\rho_B^{-1}} \\
  & B &
}
\end{gathered}
\]
i.e., $\Delta$ is coassociative and $\varepsilon$ is a counit.
It is \defines{cocommutative} if $\Delta = \sigma_{B,B}\compos \Delta$, where $\sigma$ is the symmetry isomorphism of the monoidal category (when it exists).
\end{definition}

\begin{example}\rm\label{examp:monoid-in-set}
In the Cartesian monoidal category $\mathbf{Set}$, a monoid object is precisely a monoid in the usual algebraic sense.
Indeed, a monoid object $(H,m,e)$ consists of a set $H$, a map $m:H\times H\to H$ which is associative,
and a map $e:\{*\}\to H$ ($\{*\}$ is a one-point set) which selects the identity element $e(*)=1_H$ of $H$.
Thus, clearly, every group $G$, with its group multiplication $m:G\times G\to G$ and unit element $1_G$, is also a monoid object in $\mathbf{Set}$.
\end{example}

\begin{definition}\rm
Let $(\Cat,\otimes,I)$ be a monoidal category.
A \defines{monoid object} in $\Cat$ is a triple $(H,m,e)$ consisting of an object $H$, a morphism $m:H\otimes H\to H$ which is called a \defines{multiplication}, and a morphism $e:I\to H$ which is called a \defines{unit}, such that the following diagrams commute:
\[
\begin{gathered}
\xymatrix@C=1.65cm{
  \checks{H\otimes H\otimes H} \ar[r]^{\checks{\id_H\otimes m}} \ar[d]_{m\otimes\id_H} & H\otimes H \ar[d]^{\checks{m}} \\
  H\otimes H \ar[r]_{m} & H
}
\quad
\xymatrix{
  I\otimes H \ar[r]^{e\otimes\id_H} \ar[dr]_{\lambda_H} & H\otimes H \ar[d]^{m} & H\otimes I \ar[l]_{\id_H\otimes e} \ar[dl]^{\rho_H} \\
  & H &
}
\end{gathered}
\]
i.e., $m$ is associative and $e$ is a two-sided unit.
It is \defines{commutative} if $m = m\compos\sigma_{H,H}$.
\end{definition}

\begin{example}\rm\label{examp:comonoid-in-set}
In the Cartesian monoidal category $\mathbf{Set}$, every set $B$ carries a unique comonoid structure given by
$\Delta:B\to B\times B, x\mapsto (x,x)$ and
$\varepsilon:B\to \{*\}, x\mapsto *$.
The comultiplication $\Delta$ is coassociative because both paths yield $(x,(y,z))$ and $((x,y),z)$, which are identified under the associator.
It is cocommutative because $\Delta(x)=(x,x)$ is symmetric.
The counit $\varepsilon$ satisfies the counitality conditions since $(\varepsilon\times\id_B)(\Delta(x))=(*,x)$, which is identified with $x$ under $\lambda_B$, and similarly for the right counit.
In particular, the one-point set $\{*\}$ is a cocommutative comonoid with $\Delta:\{*\}\to \{*\}\otimes \{*\} := \{*\}\times \{*\}$, $\Delta(*)=(*,*)$, and $\varepsilon:\{*\} \to \{*\}$, $\varepsilon(*)=*$.
\end{example}

Next, we recall the definition of convolution.

\begin{definition} \label{def:convolution}\rm
Let $(\Cat,\otimes,I)$ be a monoidal category, let $(B,\Delta,\varepsilon)$ be a comonoid object, and let $(H,m,e)$ be a monoid object.
The \defines{convolution} on $\Hom_{\Cat}(B,H)$ is
\[ f\star g := m\compos(f\otimes g)\compos\Delta: \quad
  B \To{\Delta} B\otimes B \To{f\otimes g} H \otimes H \To{m} H. \]
\end{definition}

\begin{lemma}[{\cite[Chap 8, 8.3.1]{AguiarMahajan2020}}] \label{lemm:AM2020}
The convolution on $\Hom_{\Cat}(B,H)$ is associative and has a unit $e\compos\varepsilon$.
Furthermore, it is commutative when $B$ is cocommutative and $H$ is commutative.
\end{lemma}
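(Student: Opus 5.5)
The plan is a direct diagrammatic computation in $(\Cat,\otimes,I)$, using only the comonoid axioms for $(B,\Delta,\varepsilon)$, the monoid axioms for $(H,m,e)$, bifunctoriality of $\otimes$ (the interchange law $(a\otimes b)\compos(c\otimes d)=(a\compos c)\otimes(b\compos d)$), and naturality of $\alpha,\lambda,\rho$. By Mac Lane's coherence theorem I will suppress the associator, so that $B\otimes B\otimes B$ and $H\otimes H\otimes H$ are unambiguous.

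\emph{Associativity.} For $f,g,h\in\Hom_{\Cat}(B,H)$, interchange gives
\[(f\star g)\star h = m\compos(m\otimes\id_H)\compos(f\otimes g\otimes h)\compos(\Delta\otimes\id_B)\compos\Delta .\]
Symmetrically,
\[f\star(g\star h)=m\compos(\id_H\otimes m)\compos(f\otimes g\otimes h)\compos(\id_B\otimes\Delta)\compos\Delta .\]
Associativity of $m$ identifies the left parts, and coassociativity of $\Delta$ identifies the right parts, so the two composites agree.

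\emph{Unit.} We have
\[(e\compos\varepsilon)\star f=m\compos(e\otimes\id_H)\compos(\id_I\otimes f)\compos(\varepsilon\otimes\id_B)\compos\Delta .\]
Here $f\otimes$ was moved past $\varepsilon$ by interchange: $(e\varepsilon)\otimes f=(e\otimes\id_H)\compos(\id_I\otimes f)\compos(\varepsilon\otimes\id_B)$. The unit axiom of $H$ gives $m\compos(e\otimes\id_H)=\lambda_H$, and naturality of $\lambda$ gives $\lambda_H\compos(\id_I\otimes f)=f\compos\lambda_B$. The counit axiom gives $\lambda_B\compos(\varepsilon\otimes\id_B)\compos\Delta=\id_B$, since $(\varepsilon\otimes\id_B)\compos\Delta=\lambda_B^{-1}$. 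Hence $(e\compos\varepsilon)\star f=f$, and the right unit works the same way using $\rho$.

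\emph{Commutativity.} Assume a symmetry $\sigma$, with $\Delta=\sigma_{B,B}\compos\Delta$ and $m=m\compos\sigma_{H,H}$. Naturality of $\sigma$ gives $\sigma_{H,H}\compos(f\otimes g)=(g\otimes f)\compos\sigma_{B,B}$. Therefore
\[g\star f=m\compos(g\otimes f)\compos\Delta=m\compos(g\otimes f)\compos\sigma_{B,B}\compos\Delta=m\compos\sigma_{H,H}\compos(f\otimes g)\compos\Delta=f\star g .\]

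The only delicate point is the bookkeeping with the suppressed associator and unitors, in particular in the unit step. There I will write out $\lambda$ and $\rho$ explicitly and appeal to naturality, rather than invoke coherence loosely. Everything else is routine.
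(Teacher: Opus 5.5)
Your proof is correct, and it is essentially the only proof there is to compare against: the paper does not prove this lemma at all but cites Aguiar--Mahajan, and your diagrammatic computation is the standard argument behind that reference. All three steps check out. The interchange factorizations are right. The unit step correctly uses $(\varepsilon\otimes\mathrm{id}_B)\circ\Delta=\lambda_B^{-1}$, which is exactly the counit triangle as the paper draws it, together with naturality of $\lambda$. The commutativity step correctly uses naturality of $\sigma$ in both variables.

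Suppressing the associator is also harmless. If you reinstate it, $\alpha_{H,H,H}$ and $\alpha_{B,B,B}$ are matched by naturality of $\alpha$ together with the associator-decorated forms of associativity and coassociativity.

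Compared with the bare citation, your version is self-contained. It also makes explicit that only the interchange law and naturality of the structure isomorphisms are used, so for commutativity a braiding would already suffice in place of a symmetry.
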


Let $G$ be a group, written multiplicatively.
We use $1$ to represent the one-point set and use $1_G$ to represent the identity element of $G$.
The one-point set is a cocommutative comonoid under its diagonal, and $G$ is a monoid object under group multiplication.

\begin{definition}\rm
For the powerset monad $\mathscr P_+$, a \defines{Kleisli point} of a set $X$ is a Kleisli morphism $1 \rightsquigarrow X$ in $\Hom_{\Kl(\mathscr P_+)}(1, X)$.
\end{definition}

\begin{lemma}\label{lemm:260820}
There is a bijection $\Hom_{\Kl(\mathscr P_+)}(1, X) \to \Pplus(X)$ for each set $X$.
\end{lemma}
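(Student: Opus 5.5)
The plan is to write down the obvious evaluation map and check directly that it has a two-sided inverse. By Definition \ref{def:Klcat}, $\Hom_{\Kl(\mathscr P_+)}(1,X)=\Hom_{\Set}(1,\Pplus(X))$, where $1=\{*\}$ is the one-point set. So a Kleisli point is nothing but an ordinary map $f:\{*\}\to\Pplus(X)$. I will define
\[ \theta_X:\Hom_{\Kl(\mathscr P_+)}(1,X)\to\Pplus(X),\qquad f\mapsto f(*), \]
which lands in $\Pplus(X)$ because $f$ takes values there, so $f(*)$ is a non-empty subset of $X$. In the other direction, for $A\in\Pplus(X)$ I let $f_A:1\rightsquigarrow X$ be the map $*\mapsto A$. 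This is the notation later used in Theorem \ref{thm:B 260822}.

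The two compositions are then verified in turn:
\begin{itemize}
  \item $\theta_X(f_A)=f_A(*)=A$, so $\theta_X$ followed by $A\mapsto f_A$ is the identity on one side.
  \item For any Kleisli point $f$, the maps $f$ and $f_{f(*)}$ agree on the unique element $*$ of the domain, so $f=f_{f(*)}$.
\end{itemize}
Hence $\theta_X$ is a bijection with inverse $A\mapsto f_A$. This is just the universal property of the one-point set in $\Set$: maps $\{*\}\to Y$ correspond bijectively to elements of $Y$, applied here with $Y=\Pplus(X)$.

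There is no real obstacle. The only point requiring care is to note explicitly that the Hom-set in the Kleisli category is, by definition, the ordinary Hom-set $\Hom_{\Set}(1,\Pplus(X))$, and that non-emptiness is built into $\Pplus$. I would also remark, for later use, that $\theta_X$ is natural in $X$ with respect to ordinary maps $g:X\to Y$: indeed $\Pplus(g)\compos f_A=f_{g[A]}$. This is immediate and sets up the monoid comparison in Theorem \ref{thm:convolution}.
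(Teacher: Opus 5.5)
Your proof is correct and is essentially the paper's argument: the paper also takes evaluation at $*$ (called $\sigma$ there) with inverse $A\mapsto(*\mapsto A)$ (called $\tau$) and checks both composites. One small wording slip: $\theta_X(f_A)=A$ shows that $A\mapsto f_A$ \emph{followed by} $\theta_X$ is the identity, not the reverse order. Since you verify both composites, this does not affect the argument.
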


\begin{proof}
Each Kleisli point $f: 1=\{*\} \rightsquigarrow X$ in $\Hom_{\Kl(\mathscr P_+)}(1, X)$ provides a subset of $X$ by using its image $\Ima(f)$, naturally.
Then we obtain a correspondence $\sigma: f \mapsto f(*) \in \Pplus(X)$, where $\Ima(f)=f(1)=\{f(*)\}$.
For each $A\in\Pplus(X)$, define $\tau: \Pplus(X) \to \Hom_{\Kl(\mathscr P_+)}(1, X)$ is the map sending $A$ to the Kleisli point $1 \rightsquigarrow X$, $*\mapsto A$.
Then we have
\[ (\tau\compos\sigma)(f: * \mapsto f(*)) = \tau(f(*)) = (1 \rightsquigarrow X, *\mapsto f(*)) = f \]
for each $f\in \Hom_{\Kl(\mathscr P_+)}(1, X)$ and
\[ (\sigma\compos\tau)(A) = \sigma(1 \rightsquigarrow X, *\mapsto A) = A \]
for each $A\in \Pplus(X)$,
i.e., we have $\tau\compos\sigma=\id_{\Hom_{\Kl(\mathscr P_+)}(1, X)}$ and $\sigma\compos\tau = \id_{\Pplus(X)}$ as required.
\end{proof}

The following result shows that Kleisli points of a group $G$ are in bijection with non-empty subsets of $G$, i.e., $\Hom_{\Kl(\mathscr P_+)}(1,G) \cong \Pplus(G)$.

\begin{theorem}\label{thm:convolution}
There is a bijection
\[ \theta_{+,G}: \Hom_{\Kl(\mathscr P_+)}(1,G) \to \Pplus(G), \quad (f: 1=\{*\} \to \Pplus(G)) \mapsto f(*)\]
Under this identification, convolution is given by
\begin{equation}\label{eq:convolution}
 A\star B=\{ab:a\in A,\ b\in B\}.
\end{equation}
The same assertion holds with $\Pplus$ replaced by $\Pfin$.
\end{theorem}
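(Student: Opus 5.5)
The bijection $\theta_{+,G}$ is exactly the map $\sigma$ of Lemma~\ref{lemm:260820} applied to $X=G$, so only the convolution formula \eqref{eq:convolution} needs work. First I will make explicit the ingredients of Definition~\ref{def:convolution} inside $\Kl(\mathscr P_+)$.

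\begin{itemize}
  \item The monoidal structure on $\Kl(\mathscr P_+)$ is $\times$ on objects. On arrows, for $f:X\rightsquigarrow Y$ and $g:X'\rightsquigarrow Y'$, set $(f\otimes g)(x,x'):=f(x)\times g(x')$. This is a non-empty subset of $Y\times Y'$; it is what the strength and commutativity of $\mathscr P_+$ give, and the remarks above say the two orders agree.
  \item Every map of sets $h:X\to Y$ becomes the Kleisli arrow $\eta_Y\compos h$. In particular the diagonal $\Delta:1\to 1\times 1$ becomes $*\mapsto\{(*,*)\}$, and the group multiplication $m:G\times G\to G$ becomes $(a,b)\mapsto\{ab\}$.
  \item The unit is $e:*\mapsto\{1_G\}$, and the counit $\varepsilon$ is the identity of $1$.
\end{itemize}

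Next, for Kleisli points $f,g$ with $f(*)=A$ and $g(*)=B$, I will unwind the Kleisli composite $m\compos_{\mathscr P_+}(f\otimes g)\compos_{\mathscr P_+}\Delta$. I use the composition formula $(g\compos_{\mathscr P_+} f)(x)=\bigcup_{y\in f(x)}g(y)$ twice.
\begin{itemize}
  \item Precomposing with $\Delta$ gives $\bigcup_{p\in\{(*,*)\}}(f\otimes g)(p)=A\times B$.
  \item Applying $m$ then gives $\bigcup_{(a,b)\in A\times B}\{ab\}=\{ab:a\in A,\ b\in B\}$.
\end{itemize}
Hence $\theta_{+,G}(f\star g)=AB$. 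Since $\theta_{+,G}$ is a bijection, this is exactly \eqref{eq:convolution}.

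Lemma~\ref{lemm:AM2020} then shows that $\star$ is associative with unit $e\compos_{\mathscr P_+}\varepsilon$, which corresponds to $\{1_G\}$. This matches the direct verification that $\Pplus(G)$ is a monoid under setwise product. The step I expect to need the most care is confirming that $\otimes$ really is a bifunctor on $\Kl(\mathscr P_+)$, i.e.\ that
\[
(f'\compos_{\mathscr P_+} f)\otimes(g'\compos_{\mathscr P_+} g)=(f'\otimes g')\compos_{\mathscr P_+}(f\otimes g).
\]
I will check this pointwise. Both sides send $(x,x')$ to
\[
\bigcup_{y\in f(x),\,y'\in g(x')} f'(y)\times g'(y').
\]
This holds because products distribute over unions, using that $\times$ of unions of non-empty sets is the union of products.

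For $\Pfin$, the same argument goes through once three closure facts are checked, which follow from finiteness. Finite non-empty sets are closed under finite products. They are closed under finite unions of finite sets, so $\Pfin$ is a submonad of $\mathscr P_+$ with the same $\eta$ and $\mu$. Finally, $AB$ is finite whenever $A$ and $B$ are.
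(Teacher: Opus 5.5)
Your proposal is correct and follows essentially the same route as the paper: the bijection comes from Lemma~\ref{lemm:260820}, and the formula comes from evaluating $m\compos(f_A\otimes f_B)\compos\Delta$ at $*$, which gives $A\times B$ and then $\{ab:a\in A,\ b\in B\}$, with finiteness handling $\Pfin$. Your version is more explicit than the paper's, which computes informally with direct images and does not spell out the lifted tensor $f(x)\times g(x')$, the Kleisli composition via unions, or the bifunctoriality of $\otimes$ on $\Kl(\mathscr P_+)$.
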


\begin{proof}
By Lemma \ref{lemm:260820}, a Kleisli point $1\rightsquigarrow G$ in $\Hom_{\Kl(\mathscr P_+)}(1,G)$ is determined by its value at the unique element of $1$, and this value is an arbitrary non-empty subset of $G$.
Then, naturally, each subset $A$ of $G$ can be seen as a Kleisli point $f_A$ in $\Hom_{\Kl(\mathscr P_+)}(1,G)$.
Therefore, we can compute the convolution
\[ A\star B: \quad
  1 \To{\Delta} 1\otimes 1 \mathop{=}^{\spadesuit} 1\times 1
  \To{f_A\otimes f_B} G \otimes G \mathop{=}^{\clubsuit} G \times G
  \To{m} G \]
for any two subsets $A$ and $B$ of $G$, where $\spadesuit$ and $\clubsuit$ holds by Example \ref{examp:comonoid-in-set}.
Let $f_A, f_B: 1=\{*\} \to \Pplus(G)$ be the Kleisli morphisms corresponding $A$ and $B$ respectively,
we have then $f_A(*)=A$ and $f_B(*) = B$. The comultiplication $\Delta: 1 \to 1\times 1:=1\otimes 1$ sends $\{*\}$ to $(*,*)$, and applying $f_A\otimes f_B$ gives $A\times B$, to be precise,
\[ (f_A\otimes f_B)(*,*) = (f_A\otimes f_B)(*\otimes *)= f_A(*)\otimes f_B(*) = A\otimes B =A\times B.  \]
It follows that
\begin{align*}
    A \star B & = (m \compos (f_A\otimes f_B) \compos \Delta)(*) \\
& = m[A\times B] \\
& = m[\{(a,b):a\in A, b\in B\}] \\
& = \{ab:a\in A, b\in B\}.
\end{align*}
This proves \eqref{eq:convolution}.
Finiteness is preserved by Cartesian products and direct images, so the same proof applies to $\Pfin$.
\end{proof}

\begin{remark}\label{rem:quantale} \rm
If the empty set is added, the complete lattice $\mathcal P(G)$ becomes a unital quantale: convolution distributes over arbitrary unions.
The non-empty part used here is closed under convolution but is not a complete lattice.
The distinction matters for representation questions, because the papers considered here use non-empty subsets.
\end{remark}

\section{Kleisli convolution representations} \label{sect:Kle convol repr}

\textsl{In this section, we introduce Kleisli convolution representations of semigroups into power semigroups of groups, generalising the classical notion of representations by subsets. We then establish a ``base-change'' theorem (see Theorem \ref{thm:base-change}), showing that every surjective group homomorphism lifts a homomorphism of Kleisli convolution representations}

\subsection{Kleisli convolution representations of semigroups}

\begin{definition}\rm\label{def:Kleisli conv repr}
Let $S$ be a semigroup and $G$ a group.  A \defines{Kleisli convolution representation} of $S$ in $G$ is a homomorphism
\[ \rho:S \to \Hom_{\Kl(\mathscr P_{+,\mathrm{fin}})}(1, G), 
  \quad \rho(st)=\rho(s)\star\rho(t) \]
of semigroups. It is \defines{faithful} if it is injective
and \defines{disjoint} if $\rho(s)\cap\rho(t)=\varnothing$ whenever $s\ne t$.
\end{definition}

\begin{remark}\rm
By Theorem \ref{thm:convolution}, this is precisely a representation of $S$ in the convolution semigroup of the Kleisli points of the group object $G$.
Thus, an embedding $S\hookrightarrow \Hom_{\Kl(\mathscr P_{+,\mathrm{fin}})}(1, G)$ and a faithful Kleisli convolution representation are the same datum, not merely analogous data.
In particular, when we think that $\Pplus(G)$ is a monoid, then the isomorphism $\theta_{+,G}$ provide a representation
$\theta_{+,G}^{-1}: \Pplus(G) \To{\cong} \Hom_{\Kl(\mathscr P_+)}(1,G)$
of $\Pplus(G)$, obviously.
\end{remark}

\begin{remark} \rm
For $\rho(s)\subseteq G$, define a \defines{relation $R$} on $S\times G$ as follows:
\begin{align}\label{relation}
  s~R~g \text{~if and only if~} g\in\rho(s).
\end{align}
This gives a purely relational description of disjoint power-semigroup embeddings.
The Kleisli convolution representation $\rho:S \to \Hom_{\Kl(\mathscr P_{+,\mathrm{fin}})}(1, G)$ of a semigroup $S$ can be encoded equivalently as the relation \eqref{relation}.
In this relational language, the homomorphism condition and the disjointness condition become, respectively:
\begin{enumerate}[label=(\Alph*)]
  \item \defines{Multiplication law}: can be written without reference to $\rho$ as
    $(st)~R~g$  if and only if $\exists x,y\in G$ such that $s R x,\ t R y,\text{~and~} g=xy.$
    \label{260820 1}
  \item \defines{Disjointness}: the converse relation $R^{\mathrm{op}}\subseteq G\times S$ is single-valued wherever it is defined.
    \label{260820 2}
\end{enumerate}
See Table \ref{tab:mult-disj}.
\begin{table}[htbp]
\centering
\begin{tabular}{ccc}
\hline
\textbf{Kleisli convolution representation} && \textbf{Relational language} \\ \hline
$\rho:S\to \Hom_{\Kl(\mathscr P_{+,\mathrm{fin}})}(1, G)$ && $R\subseteq S\times G$, $s\,R\,g \iff g\in\rho(s)$ \\
$\rho(st)=\rho(s)\star\rho(t)$ && $st\,R\,g \iff \exists x,y: s\,R\,x,\ t\,R\,y,\ g=xy$ \\
$\rho(s)\cap\rho(t)=\varnothing$ $(s\ne t)$ && $R^{\mathrm{op}}$ is single-valued \\ \hline
\end{tabular}
\caption{Correspondence between Kleisli convolution representation and relational language}
\label{tab:mult-disj}
\end{table}
Conversely, given a relation $R$ on $S\times G$ satisfying \ref{260820 1} and \ref{260820 2}
such that for every $s\in S$, there is a $g\in G$ with $sRg$.
One recovers the representation by $\rho(s)=\{g\in G:s\,R\,g\}$.
Thus, \emph{a disjoint Kleisli convolution representation is the same datum as a relation whose multiplication law holds and whose converse is single-valued}.
The following is a theorem proved by Bershadsky and Kublanovsky \cite[Theorem 1.1]{BK2025}, expressed in the terminology above:
\begin{itemize}
  \item {\rm(\cite[Theorem 1.1, (1)]{BK2025})}
    every semigroup has a disjoint faithful Kleisli convolution representation in a symmetric group of sufficiently large degree;
  \item {\rm(\cite[Theorem 1.1, (2)]{BK2025})}
    every semigroup also has a disjoint faithful Kleisli convolution representation in a free group with sufficiently many free generators.
\end{itemize}
\end{remark}

\subsection{Embeddings of Hom-spaces in Kleisli category}

Let $F$ and $G$ be two groups and $q:F\to G$ be a homomorphism of groups. For $A\subseteq G$, write
\[ q^{-1}(A):= \{ x\in F : q(x)\in A \}.\]

\begin{lemma}[{\cite{BK2025}}] \label{lemm:P+}
For each group $G$, we have $\Pplus(G)$ is a monoid.
\end{lemma}

\begin{proof}
By Theorem \ref{thm:convolution}, we have a bijection $\Hom_{\Kl(\mathscr P_+)}(1,G) \to \Pplus(G)$,
and $\Pplus(G)$ is a semigroup whose elements can be seen as Kleisli points in $\Hom_{\Kl(\mathscr P_+)}(1,G)$
and whose multiplication is induced by the convolution in $\Hom_{\Kl(\mathscr P_+)}(1,G)$.
Moreover, $\Pplus(G)$ has an identity. Thus, $\Pplus(G)$ is a monoid.
\end{proof}

\begin{proposition} \label{prop:base-change}
If a homomorphism $q:F\to G$ of groups is surjective, then the following statements hold.
\begin{enumerate}[label={\rm(\arabic*)}]
  \item The map $q^{-1}: \Pplus(G)\to\Pplus(F)$ is an injective homomorphism of semigroups, i.e.,
    $q^{-1}(A\star B)=q^{-1}(A)\star q^{-1}(B)$ for all non-empty subsets $A,B\subseteq G$;
  \item Disjoint families remain disjoint after applying $q^{-1}$.
\end{enumerate}
\end{proposition}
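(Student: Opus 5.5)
We argue elementwise, using the description of convolution in Theorem \ref{thm:convolution}: $A\star B=\{ab:a\in A,\ b\in B\}$, and similarly in $\Pplus(F)$.

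First we check that $q^{-1}$ is well defined as a map $\Pplus(G)\to\Pplus(F)$. If $A\ne\varnothing$, surjectivity of $q$ provides some $x\in F$ with $q(x)\in A$, so $q^{-1}(A)\ne\varnothing$.

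For injectivity, surjectivity again gives $q(q^{-1}(A))=A$. Hence $q^{-1}(A)=q^{-1}(B)$ forces $A=B$.

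For the homomorphism property in (1), we prove the two inclusions separately.
\begin{itemize}
  \item The inclusion $q^{-1}(A)\star q^{-1}(B)\subseteq q^{-1}(A\star B)$ is immediate. If $q(x)\in A$ and $q(y)\in B$, then $q(xy)=q(x)q(y)\in A\star B$.
  \item The reverse inclusion is the only step where the group structure of $F$ matters. Take $z\in F$ with $q(z)=ab$, where $a\in A$ and $b\in B$. By surjectivity choose $x\in F$ with $q(x)=a$, and set $y:=x^{-1}z$. Then $q(y)=a^{-1}ab=b\in B$, so $z=xy\in q^{-1}(A)\star q^{-1}(B)$.
\end{itemize}
This trick of correcting one factor by an inverse is the crux of the argument. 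Without inverses in $F$, a lift of $ab$ need not split as a product of lifts of $a$ and $b$.

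For (2), let $(A_i)$ be a pairwise disjoint family. Suppose $x\in q^{-1}(A_i)\cap q^{-1}(A_j)$. Then $q(x)\in A_i\cap A_j$, so $i=j$. Equivalently, $q^{-1}(A_i)\cap q^{-1}(A_j)=q^{-1}(A_i\cap A_j)=q^{-1}(\varnothing)=\varnothing$ whenever $i\ne j$.

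Finally, one could check whether $q^{-1}$ preserves the identity $\{1_G\}$. It sends $\{1_G\}$ to $\Ker q$, which is not $\{1_F\}$ in general. This explains why the statement claims only a homomorphism of semigroups, and we would not attempt to prove more.
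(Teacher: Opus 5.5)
Your proposal is correct and follows essentially the same route as the paper: both inclusions are proved elementwise, with the correction $y=x^{-1}z$ for the reverse inclusion, injectivity comes from lifting elements via surjectivity (equivalent to your $q(q^{-1}(A))=A$), and disjointness holds because preimages preserve intersections. Your explicit use of surjectivity to show that $q^{-1}(A)\neq\varnothing$, so that $q^{-1}$ really lands in $\Pplus(F)$, makes precise a step the paper's proof glosses over.
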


\begin{proof}
First of all, we have that $\Pplus(G)$ and $\Pplus(F)$ are monoids by Lemma \ref{lemm:P+}.
By the definition of $q^{-1}$, we have that $q^{-1}$ sends each $A \in \Pplus(G)$ to
the subset $q^{-1}(A) = \{ x \in F : q(x)\in A\}$ of $F$,
then $\Ima(q^{-1}) \subseteq \Pplus(F)$, i.e., $q^{-1}$, as a map from the monoid $\Pplus(G)$ to the monoid $\Pplus(F)$, is well-defined.

Take two arbitrary subset $A$ and $B$ in $\Pplus(G)$.
For any $x\in q^{-1}(A)$ and $y\in q^{-1}(B)$, we have $q(xy)=q(x)q(y)\in A\star B$.
It follows $q^{-1}(A)\star q^{-1}(B)\subseteq q^{-1}(A\star B)$.
Conversely, take $z\in q^{-1}(A\star B)$, we can find two element $a\in A$ and $b\in B$ such that $q(z)=ab$.
Since $q$ is surjective, there exists $x\in F$ with $q(x)=a$, then $q(x)^{-1}q(z)=q(x^{-1}z)=a^{-1}ab=b$,
and so, $x^{-1}z\in q^{-1}(B)$. Note that $x\in q^{-1}(A)$, then $z=x\cdot x^{-1}z \in q^{-1}(A)\star q^{-1}(B)$.
It follows $q^{-1}(A)\star q^{-1}(B)\supseteq q^{-1}(A\star B)$.
Therefore, we have $q^{-1}(A)\star q^{-1}(B) = q^{-1}(A\star B)$.

If $q^{-1}(A)=q^{-1}(B)$ and $a\in A$, choose $x\in F$ with $q(x)=a$. Then $x\in q^{-1}(B)$, so $a=q(x)\in B$.
This proves $A\subseteq B$, and the opposite inclusion is symmetric. Hence $q^{-1}$ is injective.
Finally, inverse images preserve intersections, so they preserve disjointness.
\end{proof}

Now we illustrate Proposition \ref{prop:base-change} with a concrete example (using additive notation, since the groups involved are additive).
This example shows that $q^{-1}$ may be not a homomorphism of monoids.

\begin{example}\rm\label{ex:base-change}\rm
Let $F=\mathbb Z$, $G=\mathbb Z/2\mathbb Z = \{0,1\}$, and $q:\mathbb Z\to \mathbb Z/2\mathbb Z$ be the canonical quotient map $n\mapsto n\bmod 2$.
Then $q$ is surjective, and the inverse-image map
$q^{-1}:\Pplus(\mathbb Z/2\mathbb Z)\longrightarrow \Pplus(\mathbb Z)$ is given by
\begin{align*}
& q^{-1}(\{0\}) = \{ n\in\mathbb Z : n\text{ is even} \} = 2\mathbb Z,\\
& q^{-1}(\{1\}) = \{ n\in\mathbb Z : n\text{ is odd} \} = 2\mathbb Z+1,\\
& q^{-1}(\{0,1\}) = \mathbb Z.
\end{align*}
Thus $q^{-1}$ is an injective homomorphism of semigroups, as guaranteed by Proposition \ref{prop:base-change}.
For instance, we have $q^{-1}(\{0\})\star q^{-1}(\{0\}) = 2\mathbb Z + 2\mathbb Z = 2\mathbb Z = q^{-1}(\{0\}\star\{0\})$
because $\{0\}\star\{0\}=\{0\}$ in $\Pplus(\mathbb Z/2\mathbb Z)$.
Similarly, for $\{0\}$ and $\{1\}$, we have $q^{-1}(\{0\})\star q^{-1}(\{1\}) = 2\mathbb Z + (2\mathbb Z+1) = 2\mathbb Z+1 = q^{-1}(\{0\}\star\{1\})$, where $\{0\}\star\{1\}=\{1\}$.
In this example, $q^{-1}$ does not preserve the monoid identity.
The identity of $\Pplus(\mathbb Z/2\mathbb Z)$ is $\{0\}$, while the identity of $\Pplus(\mathbb Z)$ is $\{0\}$.
But $q^{-1}(\{0\}) = 2\mathbb Z \neq \{0\}$.
Therefore, $q^{-1}$ is a homomorphism of semigroups but not of monoids.
\end{example}

\begin{theorem}\label{thm:base-change}
Every surjective homomorphism $q:F\to G$ of groups induces an embedding
\[ q^{\dag}: \Hom_{\Kl(\mathscr P_+)}(1, G) \hookrightarrow \Hom_{\Kl(\mathscr P_+)}(1, F) \]
sends each Kleisli point $f_A: 1=\{*\} \rightsquigarrow G$ $($defined by $f_A(*)=A\subseteq G)$ in $\Hom_{\Kl(\mathscr P_+)}(1, G)$ to the Kleisli point $f_{q^{-1}(A)}: 1 \rightsquigarrow F$ $($satisfying $f_{q^{-1}(A)}(*)={q^{-1}(A)}\subseteq F)$ in $\Hom_{\Kl(\mathscr P_+)}(1, F)$.
\end{theorem}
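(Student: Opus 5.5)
The plan is to define $q^{\dag}$ by transporting $q^{-1}$ along the bijections of Theorem \ref{thm:convolution}:
\[ q^{\dag} := \theta_{+,F}^{-1}\compos q^{-1}\compos \theta_{+,G}, \]
so that $q^{\dag}(f_A)=f_{q^{-1}(A)}$ by construction. First, $q^{\dag}$ is well defined. Since $q$ is surjective, for each non-empty $A\subseteq G$ the preimage $q^{-1}(A)$ is non-empty: pick $a\in A$ and a lift $x\in F$ with $q(x)=a$. Hence $q^{-1}(A)\in\Pplus(F)$, and $f_{q^{-1}(A)}$ is a genuine Kleisli point $1\rightsquigarrow F$.

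Next, we check that $q^{\dag}$ respects convolution. By Theorem \ref{thm:convolution}, the identity $\theta_{+,G}(f_A\star f_B)=A\star B=\{ab: a\in A,\ b\in B\}$ holds, and the analogous identity holds for $F$. Therefore
\[ q^{\dag}(f_A\star f_B)=f_{q^{-1}(A\star B)} \quad\text{and}\quad q^{\dag}(f_A)\star q^{\dag}(f_B)=f_{q^{-1}(A)\star q^{-1}(B)}. \]
These agree by Proposition \ref{prop:base-change}(1).

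For injectivity, suppose $q^{\dag}(f_A)=q^{\dag}(f_B)$. Evaluating at $*$ gives $q^{-1}(A)=q^{-1}(B)$, and the injectivity part of Proposition \ref{prop:base-change}(1) yields $A=B$. Thus $q^{\dag}$ is an injective semigroup homomorphism, i.e.\ an embedding.

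No step is a real obstacle, since the substantive content, the equality $q^{-1}(A\star B)=q^{-1}(A)\star q^{-1}(B)$, is already Proposition \ref{prop:base-change}. The only subtle point is what "embedding" means here. As Example \ref{ex:base-change} shows, $q^{\dag}$ need not send the unit $f_{\{1_G\}}$ to $f_{\{1_F\}}$: it sends it to the Kleisli point of $\ker q$. So the embedding must be stated as one of semigroups, not of monoids. I would also remark that $q^{\dag}$ is exactly Kleisli precomposition-free pullback, namely $f\mapsto q^{-1}\compos f$ pointwise. It is not a Kleisli composite with a map $G\rightsquigarrow F$, because the fibre of $q$ over $a\in G$ is the coset $q^{-1}(a)$, and choosing $g\mapsto q^{-1}(g)$ as a Kleisli arrow $G\rightsquigarrow F$ actually gives $q^{\dag}(f)=(g\mapsto q^{-1}(g))\compos_{\mathscr P_+} f$. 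This last observation provides the categorical interpretation of the result.
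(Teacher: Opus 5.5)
Your proposal is correct and is essentially the paper's proof. As in the paper, $q^{\dag}$ is the transport $\theta_{+,F}^{-1}\compos q^{-1}\compos\theta_{+,G}$ of the inverse-image map along the bijections of Theorem~\ref{thm:convolution}, and both multiplicativity and injectivity come from Proposition~\ref{prop:base-change}; your explicit use of surjectivity to get $q^{-1}(A)\neq\varnothing$ is a useful addition. In your closing aside, drop the claim that $q^{\dag}$ is ``not a Kleisli composite'': as you then correctly compute, $q^{\dag}(f)=k\compos_{\mathscr P_+}f$ for the Kleisli arrow $k\colon G\rightsquigarrow F$, $g\mapsto q^{-1}(g)$, since $\bigcup_{a\in A}q^{-1}(a)=q^{-1}(A)$, so $q^{\dag}$ is exactly Kleisli postcomposition with $k$.
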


\begin{proof}
By Proposition \ref{prop:base-change}, we have an injective homomorphism $q^{-1}: \Pplus(G)\to\Pplus(F)$ of semigroups.
Theorem \ref{thm:convolution} admits two isomorphisms
\begin{center}
  $\sigma_G:\Hom_{\Kl(\mathscr P_+)}(1, G) \To{\cong} \Pplus(G)$
  and $\sigma_H:\Hom_{\Kl(\mathscr P_+)}(1, F) \To{\cong} \Pplus(F)$.
\end{center}
Then we obtain a composition of homomorphisms
\[ \Hom_{\Kl(\mathscr P_+)}(1, G) \To{\sigma_G} \Pplus(G) \To{q^{-1}} \Pplus(F)
   \To{\sigma_F^{-1}} \Hom_{\Kl(\mathscr P_+)}(1, F)\]
which is $q^{\dag}$ as required. Here, for each $f_A: 1\rightsquigarrow G$, $*\mapsto A$, we have
$\sigma_F\compos q^{-1}\compos \sigma_G(f_A)=\sigma_F(q^{-1}(A))=f_{q^{-1}(A)}$
where $f_{q^{-1}(A)}$ is the Kleisli point lying in $\Hom_{\Kl(\mathscr P_+)}(1,F)$
whose image contains only one element $q^{-1}(A)$.
Finally, since Proposition \ref{prop:base-change} yields that $q^{-1}$ is injective, so is $q^{\dag}$.
\end{proof}

There is a corollary worth showing here.

\begin{corollary}\label{coro:lift-representation}
If $S$ has a faithful {\rm(}resp., disjoint faithful{\rm)} Kleisli convolution representation in $G$, then every surjective group homomorphism $q:F\twoheadrightarrow G$ lifts it to a faithful {\rm(}resp., disjoint faithful{\rm)} representation in $F$.
\end{corollary}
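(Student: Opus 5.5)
The plan is to compose the given representation with the embedding $q^{\dag}$ of Theorem \ref{thm:base-change}. Let $\rho:S\to\Hom_{\Kl(\mathscr P_{+})}(1,G)$ be a faithful Kleisli convolution representation and let $q:F\twoheadrightarrow G$ be surjective. I will set
\[ \tilde\rho := q^{\dag}\compos\rho : S\to \Hom_{\Kl(\mathscr P_+)}(1,F), \]
so that, under the identification $\theta_{+,-}$ of Theorem \ref{thm:convolution}, $\tilde\rho(s)=q^{-1}(\rho(s))$.

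\textbf{Step 1: $\tilde\rho$ is a representation.} By Proposition \ref{prop:base-change}(1), $q^{-1}$ is a homomorphism of semigroups for $\star$. Since $q^{\dag}$ is $q^{-1}$ transported along the bijections $\theta_{+,G}$ and $\theta_{+,F}$, which are compatible with convolution by Theorem \ref{thm:convolution}, we get
\[ \tilde\rho(st)=q^{-1}(\rho(s)\star\rho(t))=q^{-1}(\rho(s))\star q^{-1}(\rho(t))=\tilde\rho(s)\star\tilde\rho(t). \]

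\textbf{Step 2: faithfulness.} The map $\tilde\rho$ is a composite of injective maps, since $q^{\dag}$ is injective by Theorem \ref{thm:base-change}. Hence $\tilde\rho$ is injective.

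\textbf{Step 3: disjointness.} If $\rho$ is in addition disjoint and $s\neq t$, then $\rho(s)\cap\rho(t)=\varnothing$. Therefore
\[ \tilde\rho(s)\cap\tilde\rho(t)=q^{-1}(\rho(s)\cap\rho(t))=\varnothing, \]
which is Proposition \ref{prop:base-change}(2).

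\textbf{The main obstacle: finiteness.} Definition \ref{def:Kleisli conv repr} is phrased with $\mathscr P_{+,\fin}$, but $q^{-1}(A)$ is finite only when $\Ker q$ is finite; for instance, $\mathbb Z\to\mathbb Z/2\mathbb Z$ produces infinite preimages, as in Example \ref{ex:base-change}. I would handle this by interpreting the lifted representation as landing in $\Hom_{\Kl(\mathscr P_+)}(1,F)$, the setting of Theorem \ref{thm:base-change}. Correspondingly, I would read the hypothesis via $\Pfin(G)\subseteq\Pplus(G)$, noting that this inclusion is compatible with $\star$ by Theorem \ref{thm:convolution}. If finite values are genuinely required in $F$, I would add the hypothesis that $\Ker q$ is finite, which makes each $q^{-1}(A)$ finite. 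Apart from this, every step is a direct citation of earlier results.
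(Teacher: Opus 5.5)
Your proposal is correct and takes essentially the same route as the paper. The paper sets $\rho' := q^{-1}\circ\rho\colon S\to\mathcal P_+(F)$ and checks the homomorphism property, injectivity and disjointness directly from Proposition~\ref{prop:base-change}; your $q^{\dag}\circ\rho$ is the same map transported along $\theta_{+,-}$. Your finiteness caveat is also well taken: the paper's proof silently lands in $\mathcal P_+(F)$ rather than in finite subsets, so reading the lift in $\mathbf{Kl}(\mathscr P_+)$ is exactly what the paper does.
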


\begin{proof}
Let $\rho:S\to\Pplus(G)$ be a faithful (respectively, disjoint faithful) Kleisli convolution representation of $S$ in $G$.
Define $\rho' := q^{-1}\compos \rho: S \to \Pplus(F)$, which send each $s\in S$ to $\rho'(s)=q^{-1}(\rho(s))\subseteq F$.
Since $\rho(s)\neq\varnothing$ for each $s\in S$ and $q$ is surjective, each $\rho'(s)$ is non-empty.
Thus $\rho'$ is well-defined.

For any $s,t\in S$, we have
\[ \rho'(st)=q^{-1}(\rho(st))
\mathop{=}\limits^{\spadesuit} q^{-1}(\rho(s)\star\rho(t))
\mathop{=}\limits^{\clubsuit} q^{-1}(\rho(s))\star q^{-1}(\rho(t))
  = \rho'(s)\star\rho'(t),\]
where $\spadesuit$ uses that $\rho$ is a Kleisli convolution representation of $S$ (see Definition \ref{def:Kleisli conv repr}), and $\clubsuit$ is given by Proposition \ref{prop:base-change}.
Hence $\rho'$ is a Kleisli convolution representation of $S$ in $F$.

If $\rho$ is faithful (i.e., injective), then $\rho'=q^{-1}\compos \rho$ is also injective,
since $q^{-1}$ is injective by Proposition \ref{prop:base-change}. Thus, $\rho'$ is faithful.
If $\rho$ is disjoint faithful, then $\rho$ is injective and the family $\{\rho(s):s\in S\}$ is pairwise disjoint.
By Proposition \ref{prop:base-change}, inverse images preserve disjointness,
then the family $\{\rho'(s):s\in S\}$ is also pairwise disjoint.
Thus, $\rho'$ is disjoint faithful.
\end{proof}

%
%
%

\section{Kleisli category over finite powerset monad} \label{sect:finite powerset monad}

\textsl{In this section, we construct the reduced finite powerset monad on the category $\Setstar$ of pointed sets, and prove that the reduced finitary power monoid $\mathcal P_{\fin,1}(H)$ of a monoid $H$ is isomorphic to the Kleisli Hom-monoid $\Hom_{\Kl(\mathscr P_{\fin})}(\mathbb Z/2\mathbb Z,H)$. This provide a Kleisli convolution representation of $\mathcal P_{\fin,1}(H)$, and identification allows automorphism problems for reduced power monoids to be studied in the language of Kleisli categories.}

\subsection{Finite powerset monad and their Kleisli categories}

Let $H=(H,\cdot)$ be a monoid. Recall that the \defines{reduced finitary power monoid} of $H$ is
\[ \Pfinred(H):=\{A\subseteq H:A\text{ is finite and }1\in A\}.\]
To express this construction without changing the notation for its elements, we work in the category $\Setstar$ of pointed sets.

For each set $X$, we can define a \defines{pointed set} $(X,*)$ for an element $*\in X$. Here, we call $*$ is the \defines{basepoint}.
Now, we consider the \defines{pointed set category} $\Setstar$ whose objects are pointed sets and whose morphisms $f:(X,*) \to (Y,\circ)$ are a map $f:X\to Y$ with the \defines{basepoint condition} $f(*)=\circ$.
Note that there is a natural correspondence $(X,*)\mapsto X$, which induces a faithful embedding $\Setstar \hookrightarrow \Set$.
Clearly, it is a forgetful functor. In view of this embedding, $\Setstar$ may be regarded as a subcategory of $\Set$. For simplicity, we shall always treat $\Setstar$ as a subcategory of $\Set$ in our paper.

For a pointed set $(X,*)$, we define
\[ \mathcal P_{\fin,*}(X,*):= \bigl(\{A\subseteq X:A\text{ is finite and }*\in A\},\{*\}\bigr). \]
and
\begin{enumerate}[label=($\mathscr{P}_{\text{fin}}$\arabic*)]
  \item for any set pointed $(X,*)$ in $\Setstar$ and any $x\in X$,
    $\eta_{(X,*)}(x) := \eta_X(x)\cup\{*\} = \{x,*\}$;
    \label{Pfin1}
  \item for any $\mathcal{A}\in \mathcal P_{\fin,*}(\mathcal P_{\fin,*}(X,*))$, $\mu_{(X,*)}(\mathcal A)$ is the union $\bigcup\limits_{A\in\mathcal A}A$.
    \label{Pfin2}
\end{enumerate}
Then one can check that
\[ \mathcal P_{\fin,-}: \Setstar \to \Setstar, \quad (X,*) \mapsto \mathcal P_{\fin,*}(X,*) \]
is a functor, because, for each pointed sets $(X,*)$, $(Y,\circ)$, $(Z,\bullet)$ and morphism $f: X\to Y$, $g: Y\to Z$,
the equation
\begin{align*}
 \mathcal P_{\fin,-}(g\compos f)(A)
& =(g\compos f)[A]=g[f[A]] \\
& =\mathcal P_{\fin,-}(g)(\mathcal P_{\fin,-}(f)(A)) \\
& =\bigl(\mathcal P_{\fin,-}(g)\compos \mathcal P_{\fin,-}(f)\bigr)(A)
   \quad (\forall A\in\mathcal P_{\fin,*}(X,*))
\end{align*}
admits that the following diagram
\[ \xymatrix{
& \mathcal P_{\fin,\circ}(Y,\circ)
  \ar[rd]^{\mathcal P_{\fin,-}(g)}
& \\
  \mathcal P_{\fin,*}(X,*)
  \ar[ru]^{\mathcal P_{\fin,-}(f)}
  \ar[rr]_{\mathcal P_{\fin,-}(g\compos f)}
&
& \mathcal P_{\fin,\bullet}(Z,\bullet)
} \]
commutes.

In this paper, $\mathcal P_{\fin,-}$ denotes the reduced finite powerset functor on $\Setstar$, sending a pointed set $(X,*)$ to
$\mathcal P_{\fin,*}(X,*) = \{A\subseteq X : A \text{ finite and } *\in A\}$.
Here the subscript ``$-$'' is a placeholder for the basepoint. When evaluated at a concrete object $(X,*)$, it is replaced by $*$, yielding $\mathcal P_{\fin,*}(X,*)$. Similarly, $\mathcal P_{\fin,-}(f)$ denotes the action of the functor on a morphism $f$. Thus $\mathcal P_{\fin,-}$ is the abstract functor, while $\mathcal P_{\fin,*}(X,*)$ is its value at a specific object.

We have the following result which shows that
the triple $\mathscr P_{\fin}=(\mathcal P_{\fin,-}, \eta, \mu)$ is a monad.

\begin{theorem}\label{thm:Pfin-monad}
The triple $\mathscr P_{\fin}=(\mathcal P_{\fin,-}, \eta, \mu)$ is a monad over $\Setstar$.
\end{theorem}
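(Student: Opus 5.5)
Our plan mirrors the proof of Lemma \ref{lemm:non-empty powerset monad}, adding the verification that every map involved respects basepoints. Functoriality of $\mathcal P_{\fin,-}$ was checked just before the statement. One point must be added: for a pointed map $f:(X,*)\to(Y,\circ)$ and $A\in\mathcal P_{\fin,*}(X,*)$, the direct image $f[A]$ is finite and contains $f(*)=\circ$. Hence $\mathcal P_{\fin,-}(f)$ lands in $\mathcal P_{\fin,\circ}(Y,\circ)$. It also sends the basepoint $\{*\}$ to $\{\circ\}$, so it is a morphism of $\Setstar$.

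Next we check that $\eta$ and $\mu$ are well-defined morphisms of pointed sets. The set $\eta_{(X,*)}(x)=\{x,*\}$ is finite and contains $*$, and $\eta_{(X,*)}(*)=\{*\}$, which is the basepoint of $\mathcal P_{\fin,*}(X,*)$. For $\mu$, an element $\mathcal A$ of $\mathcal P_{\fin,*}(\mathcal P_{\fin,*}(X,*))$ is a finite family of finite subsets each containing $*$, and it contains $\{*\}$. Hence $\bigcup\mathcal A$ is finite and contains $*$. Moreover, $\mu$ sends the basepoint $\{\{*\}\}$ to $\{*\}$. Naturality of $\eta$ follows from $f[\{x,*\}]=\{f(x),\circ\}$, using $f(*)=\circ$. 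Naturality of $\mu$ follows from $f[\bigcup\mathcal A]=\bigcup_{A\in\mathcal A}f[A]$, exactly as in Lemma \ref{lemm:non-empty powerset monad}.

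Then we verify the three identities of \ref{M1}. Associativity $\mu\compos\mathcal P_{\fin,-}(\mu)=\mu\compos\mu_{\mathcal P_{\fin,*}(X,*)}$ is the same double-union computation as \eqref{eq:non-empty powerset monad 3}, so it carries over verbatim. For the unit laws we first compute $\mu(\eta_{\mathcal P_{\fin,*}(X,*)}(A))=\bigcup\{A,\{*\}\}=A\cup\{*\}=A$, which holds because $*\in A$. We then compute $\mu(\mathcal P_{\fin,-}(\eta)(A))=\bigcup_{x\in A}\{x,*\}=A\cup\{*\}=A$, again because $*\in A$.

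The only real obstacle is the unit laws. Since $\eta$ adjoins the basepoint, these laws hold only because every element of $\mathcal P_{\fin,*}(X,*)$ already contains $*$. This point must be made explicit: the same $\eta$ would fail to be a unit on the unpointed $\Pfin$. We also need to stress that all structure maps are basepoint-preserving, so that the monad lives in $\Setstar$ rather than merely in $\Set$.
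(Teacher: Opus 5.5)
Your proposal is correct and follows essentially the same route as the paper's proof. Both arguments check that the direct-image functor is well-defined, verify naturality of $\eta$ and $\mu$, obtain the two unit laws from $A\cup\{*\}=A$, and get associativity from the double-union identity. The one difference is that you also explicitly check that $\eta_{(X,*)}$, $\mu_{(X,*)}$ and $\mathcal P_{\fin,-}(f)$ preserve basepoints (for instance $\eta_{(X,*)}(*)=\{*\}$ and $\mu_{(X,*)}(\{\{*\}\})=\{*\}$), which the paper leaves implicit.
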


\begin{proof}
We have show that $\mathcal P_{\fin,-}$ is a endofunctor on $\Setstar$, with action on morphisms given by direct image
$\mathcal P_{\fin,*}(f)(A)=f[A]=\{f(a):a\in A\}$ for any $f:(X,*)\to(Y,\circ)$ and $A\in\mathcal P_{\fin,*}(X,*)$.
This is well-defined because finite subsets are mapped to finite subsets, and $f(*)=\circ$ ensures $\circ\in f[A]$.

Next, for any morphism $f:(X,*)\to(Y,\circ)$ in $\Setstar$ and any $x\in X$, we have
\[
  \mathcal P_{\fin,-}(f)(\eta_{(X,*)}(x))
= \mathcal P_{\fin,-}(f)(\{x,*\})
= \{f(x), f(*)\}
= \{f(x), \circ\}
= \eta_{(Y,\circ)}(f(x)).
\]
It follows that $\mathcal P_{\fin,-}(f)\compos \eta_{(X,*)} = \eta_{(Y,\circ)}\compos f$. Thus, the naturality of $\eta$ holds.

For any $f:(X,*)\to(Y,\circ)$ and any $\mathcal A\in \mathcal P_{\fin,-}(\mathcal P_{\fin,-}(X,*))$, we have
\[
\mathcal P_{\fin,-}(f)(\mu_{(X,*)}(\mathcal A))
= \mathcal P_{\fin,-}(f)\bigg(\bigcup_{A\in\mathcal A} A\bigg)
= f\bigg[\bigcup_{A\in\mathcal A} A\bigg]
= \bigcup_{A\in\mathcal A} f[A].
\]
On the other hand,
\[
  \mu_{(Y,\circ)}\Big(\mathcal P_{\fin,-}(\mathcal P_{\fin,-}(f))(\mathcal A) \Big)
= \mu_{(Y,\circ)}(\{\mathcal P_{\fin,-}(f)(A): A\in\mathcal A\})
= \mu_{(Y,\circ)}(\{f[A]: A\in\mathcal A\})
= \bigcup_{A\in\mathcal A} f[A].
\]
Then the two sides of $\mathcal P_{\fin,-}(f)\compos \mu_{(X,*)} = \mu_{(Y,\circ)}\compos \mathcal P_{\fin,-}(\mathcal P_{\fin,-}(f))$ coincide, proving the naturality of $\mu$.

Next, we show that $\mathscr P_{\fin}$ satisfies \ref{M1}.

For any $A\in\mathcal P_{\fin,*}(X,*)$, we have
$\mathcal P_{\fin,*}(\eta_{(X,*)})(A)
= \{\eta_{(X,*)}(x): x\in A\}
= \{\{x,*\}: x\in A\}$.
Then, by $*\in A$, we have
$\mu_{(X,*)}(\mathcal P_{\fin,*}(\eta_{(X,*)})(A))
= \bigcup_{x\in A} \{x,*\}
= A\cup\{*\} = A$.
By the arbitrariness of $A$, we obtain $\mu_{(X,*)}\compos \mathcal P_{\fin,*}(\eta_{(X,*)})=\mathrm{id}$.

Note that $*$ is the basepoint of $\mathcal P_{\fin,*}(X,*)$, then $\eta_{\mathcal P_{\fin,*}(X,*)}(A) = \{A, \{*\}\}$.
Therefore, or any $A\in\mathcal P_{\fin,*}(X,*)$, we have
$\mu_{(X,*)}(\eta_{\mathcal P_{\fin,*}(X,*)}(A)) = \mu_{(X,*)}(\{A,\{*\}\}) = A\cup\{*\} = A$.
By the arbitrariness of $A$, we obtain $\mu_{(X,*)}\compos \eta_{\mathcal P_{\fin,*}(X,*)} = \mathrm{id}$.

Take any $\mathfrak A \in \mathcal P_{\fin,*}(\mathcal P_{\fin,*}(\mathcal P_{\fin,*}(X,*)))$, we have
\[\mathcal P_{\fin,*}(\mu_{(X,*)}(\mathfrak A))
= \{\mu_{(X,*)}(\mathcal A): \mathcal A\in\mathfrak A\}
= \bigg\{\bigcup_{A\in\mathcal A} A : \mathcal A\in\mathfrak A\bigg\}.\]
Applying $\mu_{(X,*)}$ gives
\[ \mu_{(X,*)}(\mathcal P_{\fin,*}(\mu_{(X,*)}(\mathfrak A)))
 = \bigcup_{\mathcal A\in\mathfrak A}\bigcup_{A\in\mathcal A} A.\]
On the other hand, $\mu_{\mathcal P_{\fin,*}(X,*)}(\mathfrak A) = \bigcup\limits_{\mathcal A\in\mathfrak A} \mathcal A$,
and then
\[
  \mu_{(X,*)} (\mu_{\mathcal P_{\fin,*}(X,*)}(\mathfrak A))
= \mu_{(X,*)}\bigg(\bigcup_{\mathcal A\in\mathfrak A}\mathcal A\bigg)
= \bigcup_{B\in \bigcup\limits_{\mathcal A\in\mathfrak A}\mathcal A} B
\mathop{=}\limits^{\spadesuit} \bigcup_{\mathcal A\in\mathfrak A}\bigcup_{A\in\mathcal A} A,
\]
where $\spadesuit$ holds since the elements of $\bigcup\limits_{\mathcal A\in\mathfrak A}\mathcal A$ are precisely the sets $A$ belonging to some $\mathcal A\in\mathfrak A$. Hence $\mu_{(X,*)} \compos \mu_{\mathcal P_{\fin,*}(X,*)} = \mu_{(X,*)} \compos \mathcal P_{\fin,*}\mu_{(X,*)}$.

All monad axioms are satisfied (\ref{M2}--\ref{M4} are not axioms requiring proof).
Therefore $\mathscr P_{\fin}=(\mathcal P_{\fin,-},\eta,\mu)$ is a monad over $\Setstar$.
\end{proof}

\subsection{Realising reduced finitary power monoids as Kleisli Hom-spaces}

The following lemma shows that it is a monoid under the Kleisli category $\Kl(\mathscr P_{\fin})$ established in Theorem \ref{thm:Pfin-monad}.

\begin{lemma} \label{lemm:Pfin(X,*)-monoid}
Under the Kleisli category $\Kl(\mathscr P_{\fin})$ of the monad $\mathscr P_{\fin}$,
for every monoid $H$ as a set in $\Setstar$, pointed by its identity element $1_H$,
the set $\mathcal P_{\fin,1_H}(H)$ is a monoid
{\rm(}under the multiplication $A\star B$, where the identity element is $\{1_H\}${\rm)}.
\end{lemma}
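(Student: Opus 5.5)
The plan is to verify the monoid axioms for $\mathcal P_{\fin,1_H}(H)$ under $A\star B=\{ab:a\in A,\ b\in B\}$ directly, and then to note that this is exactly the convolution obtained from the Kleisli structure, in the same way as in Theorem \ref{thm:convolution}.

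\textbf{Closure.} Take $A,B\in\mathcal P_{\fin,1_H}(H)$. The set $A\star B$ is the image of the finite set $A\times B$ under the multiplication map $m:H\times H\to H$, so it is finite. Since $1_H\in A$ and $1_H\in B$, we get $1_H=1_H1_H\in A\star B$. Hence $A\star B\in\mathcal P_{\fin,1_H}(H)$, so $\star$ is a binary operation on $\mathcal P_{\fin,1_H}(H)$.

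\textbf{Associativity.} For $A,B,C$ in the set, both $(A\star B)\star C$ and $A\star(B\star C)$ equal $\{abc:a\in A,\ b\in B,\ c\in C\}$, by associativity of $H$. Alternatively, one can quote Lemma \ref{lemm:AM2020}. In the pointed setting, the comonoid is the basepoint-reflecting diagonal on $1$ or $\mathbb Z/2\mathbb Z$, and $H$ is a monoid object in $\Setstar$ because $m(1_H,1_H)=1_H$, so $m$ is a pointed map. The direct computation avoids having to set up the monoidal structure on $\Setstar$ (for instance, the smash product versus the Cartesian product), so I would give the direct argument as the proof and mention the convolution viewpoint only as a remark.

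\textbf{Unit.} The set $\{1_H\}$ is finite and contains $1_H$, so it lies in $\mathcal P_{\fin,1_H}(H)$. It is also the basepoint of $\mathcal P_{\fin,1_H}(H)$. We have $\{1_H\}\star A=\{1_Ha:a\in A\}=A$, and likewise $A\star\{1_H\}=A$. Thus $\{1_H\}$ is a two-sided identity.

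\textbf{Main subtlety.} The only real issue is the phrase ``under the Kleisli category''. The statement itself only requires a monoid structure on the set $\mathcal P_{\fin,1_H}(H)$, and that has been established above. The link to Kleisli arrows, namely identifying $\star$ with $m\compos_{\mathscr T}(f\otimes g)\compos_{\mathscr T}\Delta$ for Kleisli points $f_A,f_B$, follows the computation in Theorem \ref{thm:convolution} line by line: $(f_A\otimes f_B)(*,*)=A\times B$, and applying $m$ gives $A\star B$. I expect this identification to be formalised in the subsequent Theorem \ref{thm:Kleisli=Pfin}, so here I would only indicate it.
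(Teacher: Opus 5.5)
Your proof is correct and follows essentially the same route as the paper: closure because $A\star B$ is finite and $1_H=1_H1_H\in A\star B$, associativity inherited elementwise from $H$, and $\{1_H\}$ as a two-sided identity. The paper also gives only this direct verification, so your remarks on the Kleisli convolution interpretation are supplementary and not needed for the lemma.
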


\begin{proof}
Let $A,B\in\mathcal P_{\fin,1_H}(H)$. Since $A$ and $B$ are finite,
the set $A\star B$ is finite. Moreover, $1_H=1_H1_H\in A\star B$,
and hence $A\star B\in\mathcal P_{\fin,1_H}(H)$.

On the other hand, for $A,B,C\in\mathcal P_{\fin,1_H}(H)$, the associativity of the
multiplication of $H$ gives
$(A\star B)\star C =\{(ab)c:a\in A,\ b\in B,\ c\in C\} = \{a(bc):a\in A,\ b\in B,\ c\in C\} =A\star(B\star C)$.
Finally, $\{1_H\}\star A=A=A\star\{1_H\}$. Therefore, $\mathcal P_{\fin,1_H}(H)$ is a monoid with identity $\{1_H\}$.
\end{proof}

%

Let $\mathbb{Z}/2\mathbb{Z}=(\{0,1\},0)$ be a monoid given by addition ``$+$''.
In this case, the identity of it is $0$.
Of course, the group structure is mentioned here only to single out the natural basepoint $0$;
in the actual proof, we never use the fact that $\mathbb Z/2\mathbb Z$ is a group,
and merely regard it as a two-element set $\{0,1\}$ with basepoint $0$.

\begin{lemma}\label{lemm:Hom-monoid}
For every monoid $H$ in $\Setstar$, pointed by its identity element $1_H$,
the Kleisli Hom-space $\Hom_{\Kl(\mathscr P_{\fin,*})}(\mathbb Z/2\mathbb Z,H)$ is a monoid
{\rm(}under the operation $(u\star v)(i):=u(i)\star v(i)$ for $i\in \mathbb Z/2\mathbb Z${\rm)}.
\end{lemma}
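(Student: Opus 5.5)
The strategy is to reduce everything to Lemma \ref{lemm:Pfin(X,*)-monoid}: the operation is defined pointwise, so it suffices to analyse the pointwise values. First I will unwind the Hom-space. A Kleisli morphism $u:(\mathbb Z/2\mathbb Z,0)\rightsquigarrow(H,1_H)$ is a morphism $u:(\{0,1\},0)\to \mathcal P_{\fin,1_H}(H)$ in $\Setstar$. By the basepoint condition, $u(0)=\{1_H\}$, the basepoint of $\mathcal P_{\fin,1_H}(H)$. The value $u(1)$ is an arbitrary element of $\mathcal P_{\fin,1_H}(H)$. Hence $u\mapsto u(1)$ is a bijection from $\Hom_{\Kl(\mathscr P_{\fin,*})}(\mathbb Z/2\mathbb Z,H)$ onto $\mathcal P_{\fin,1_H}(H)$. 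This mirrors Lemma \ref{lemm:260820}, with the point $0$ forced and the point $1$ free.

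Next I will check that $u\star v$ is well defined, i.e.\ that it lies in the Hom-space again. For each $i$, the set $(u\star v)(i)=u(i)\star v(i)$ is a finite subset containing $1_H$, by the closure part of Lemma \ref{lemm:Pfin(X,*)-monoid}. Moreover, $(u\star v)(0)=\{1_H\}\star\{1_H\}=\{1_H\}$. So $u\star v$ respects the basepoint and is a morphism in $\Setstar$, hence a Kleisli morphism.

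For the monoid axioms, I will argue pointwise. Associativity holds because $((u\star v)\star w)(i)=(u(i)\star v(i))\star w(i)=u(i)\star(v(i)\star w(i))$, again by Lemma \ref{lemm:Pfin(X,*)-monoid}. For the identity, I take $e$ with $e(0)=e(1)=\{1_H\}$. This is a basepoint-preserving map, and it equals $\eta$-type data composed with the counit, in the spirit of Lemma \ref{lemm:AM2020}. Then $e\star u=u=u\star e$ pointwise.

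The only step needing care is the basepoint check for $u\star v$ and for $e$. Everything else is inherited verbatim from $\mathcal P_{\fin,1_H}(H)$, since the pointwise operation makes the Hom-space a submonoid of the product $\mathcal P_{\fin,1_H}(H)\times\mathcal P_{\fin,1_H}(H)$, namely the pairs whose first coordinate is $\{1_H\}$.
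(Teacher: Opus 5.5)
Your proposal is correct and follows essentially the same route as the paper: closure of each value $u(i)\star v(i)$ in $\mathcal P_{\fin,1_H}(H)$ via Lemma \ref{lemm:Pfin(X,*)-monoid}, the basepoint check $(u\star v)(0)=\{1_H\}\star\{1_H\}=\{1_H\}$, pointwise associativity, and the constant map with value $\{1_H\}$ as the identity. The extra bijection $u\mapsto u(1)$ and the aside linking the identity to Lemma \ref{lemm:AM2020} are not needed; the latter is only heuristic, and you verify $e\star u=u=u\star e$ directly anyway.
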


\begin{proof}
By Theorem \ref{thm:Pfin-monad}, we have established the Kleisli category
$\Kl(\mathscr P_{\fin})$ on the monad $P_{\fin,*}$.
This means that our discussion for
$\Hom_{\Kl(\mathscr P_{\fin})}(\mathbb Z/2\mathbb Z,H)$ is valid.
For $u,v\in\Hom_{\Kl(\mathscr P_{\fin})} (\mathbb Z/2\mathbb Z$, $H)$ and $i\in\mathbb Z/2\mathbb Z$, define $(u\star v)(i):=u(i)\star v(i)$.
By Lemma \ref{lemm:Pfin(X,*)-monoid}, $u(i)\star v(i)$ belongs to $\mathcal P_{\fin,1_H}(H)$.
Moreover, since $u$ and $v$ are pointed maps, we have $u(0)=v(0)=\{1_H\}$, and hence $(u\star v)(0)=\{1_H\}\star\{1_H\}=\{1_H\}$.
Thus, $u\star v$ is a pointed map.
For any $u,v,w$ and $i\in\mathbb Z/2\mathbb Z$, we have
$ ((u\star v)\star w)(i) = (u(i)\star v(i))\star w(i) = u(i)\star(v(i)\star w(i)) = (u\star(v\star w))(i)$.
Therefore, the convolution is associative.
The identity element is the pointed map $\mathbf e$ defined by $\mathbf e(0)=\mathbf e(1)=\{1_H\}$,
since for any $A\in\mathcal P_{\fin,1_H}(H)$, $\{1_H\}\star A=A=A\star\{1_H\}$.
Thus, $\Hom_{\Kl(\mathscr P_{\fin})}(\mathbb Z/2\mathbb Z,H)$ is a monoid under convolution.
\end{proof}

\begin{theorem}\label{thm:Kleisli=Pfin}
For every monoid $H$, pointed by its identity element $1_H$, there is a natural bijection
\[ \theta_{\fin,H}: \Hom_{\Kl(\mathscr P_{\fin,*})}(\mathbb{Z}/2\mathbb{Z},H) \to \mathcal P_{\fin,1_H}(H)\]
of monoids. Under this identification, convolution is the usual product of reduced finite subsets.
\end{theorem}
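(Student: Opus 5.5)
We will define $\theta_{\fin,H}$ by evaluation at the non-basepoint, $\theta_{\fin,H}(u):=u(1)$. The proof then splits into a bijection step and a homomorphism step.

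\textbf{Step 1 (well-definedness).} A Kleisli arrow $u\in\Hom_{\Kl(\mathscr P_{\fin,*})}(\mathbb Z/2\mathbb Z,H)$ is, by Definition \ref{def:Klcat}, a morphism $(\{0,1\},0)\to\mathcal P_{\fin,1_H}(H)$ in $\Setstar$. By the definition of $\mathcal P_{\fin,*}$, the basepoint of the target is $\{1_H\}$. The basepoint condition therefore forces $u(0)=\{1_H\}$, while $u(1)$ is an arbitrary element of $\mathcal P_{\fin,1_H}(H)$. So $\theta_{\fin,H}$ lands in $\mathcal P_{\fin,1_H}(H)$.

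\textbf{Step 2 (bijection).} For $A\in\mathcal P_{\fin,1_H}(H)$, let $u_A$ be the map with $u_A(0)=\{1_H\}$ and $u_A(1)=A$. This is a pointed map, hence a Kleisli arrow. We will check $\theta_{\fin,H}(u_A)=A$ and $u_{\theta_{\fin,H}(u)}=u$. The second identity uses that $u$ is determined by its values on the two points of $\{0,1\}$, and that its value at $0$ is already fixed by Step 1. This mirrors the proof of Lemma \ref{lemm:260820}.

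\textbf{Step 3 (monoid homomorphism).} We use the pointwise convolution of Lemma \ref{lemm:Hom-monoid}: for $u,v$,
\[ \theta_{\fin,H}(u\star v)=(u\star v)(1)=u(1)\star v(1)=\theta_{\fin,H}(u)\star\theta_{\fin,H}(v). \]
By Lemma \ref{lemm:Pfin(X,*)-monoid}, the right-hand side is the usual product $\{ab:a\in u(1),\,b\in v(1)\}$ of reduced finite subsets. The unit $\mathbf e$ of Lemma \ref{lemm:Hom-monoid} is sent to $\mathbf e(1)=\{1_H\}$, which is the identity of $\mathcal P_{\fin,1_H}(H)$. Hence $\theta_{\fin,H}$ is a monoid isomorphism, and under it convolution becomes the usual subset product.

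\textbf{Step 4 (naturality in $H$).} For a monoid homomorphism $\varphi:H\to H'$ (a pointed map, since $\varphi(1_H)=1_{H'}$), we will check that post-composition with $\mathcal P_{\fin,-}(\varphi)$ on the Kleisli side corresponds to direct image $A\mapsto\varphi[A]$ on the subset side. This holds because $\theta_{\fin,H'}(\mathcal P_{\fin,-}(\varphi)\compos u)=\varphi[u(1)]$. The identity $\varphi[A\star B]=\varphi[A]\star\varphi[B]$ shows the maps involved are monoid maps.

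The main obstacle is not computational. It is making precise which multiplication is meant on the Hom-space. The convolution of Definition \ref{def:convolution} would need $\mathbb Z/2\mathbb Z$ to be a comonoid in $\Setstar$ with the smash-type tensor product. Rather than develop that structure, the plan is to take the pointwise operation of Lemma \ref{lemm:Hom-monoid} as the defining convolution. One can remark that it agrees with a diagonal-type convolution, since the diagonal of $\{0,1\}$ is pointed. With that convention fixed, Steps 1–3 are routine.
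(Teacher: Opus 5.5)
Your proposal is correct and follows the paper's proof. In both, $\theta_{\fin,H}$ is evaluation at $1$, the basepoint condition forces $u(0)=\{1_H\}$, and the homomorphism property reads $\theta_{\fin,H}(u\star v)=u(1)\star v(1)$ for the pointwise operation of Lemma \ref{lemm:Hom-monoid}. You read this off directly where the paper cites Theorem \ref{thm:convolution}, and your Step 4 checks the naturality that the paper only asserts.

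One small correction to your side remark: the tensor that makes $H$ a monoid object in $\Setstar$ is the Cartesian product, not the smash product. The multiplication cannot factor through $H\wedge H$, because $m(h,1_H)=h$.
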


\begin{proof}
First of all, we have established the Kleisli category $\Kl(\mathscr P_{\fin})$ in Theorem \ref{thm:Pfin-monad}.
Thus, we can use Lemmas \ref{lemm:Pfin(X,*)-monoid} and \ref{lemm:Hom-monoid} in this proof.
By Lemma \ref{lemm:Pfin(X,*)-monoid}, we have that $\mathcal P_{\fin,1_H}(H)$ is a monoid,
and by Lemma \ref{lemm:Hom-monoid}, we have that $\Hom_{\Kl(\mathscr P_{\fin,*})}(\mathbb{Z}/2\mathbb{Z},H)$ is a monoid.

We restrict Theorem \ref{thm:convolution} from $\mathbf{Set}$ to $\Setstar$ by requiring all subsets to contain the basepoint, and then further restrict to finite subsets. This yields a bijection $\theta$
from $\Hom_{\Kl(\mathscr P_{\fin,*})}(\mathbb Z/2\mathbb Z, H)$ to $\mathcal P_{\fin,1_H}(H)$.
To be more precise, by Definition \ref{def:Klcat}, a Kleisli morphism $u:\mathbb Z/2\mathbb Z\rightsquigarrow H$ is a pointed map
$u:\mathbb Z/2\mathbb Z \to \mathcal P_{\fin,*}(H,1_H)$ in $\Setstar$.
Since $\mathbb Z/2\mathbb Z=\{0,1\}$ has basepoint $0$, the pointed condition gives $u(0)=\{1_H\}$.
Thus $u$ is determined solely by $u(1)$, which can be any element of $\mathcal P_{\fin,*}(H,1_H)$.
Hence evaluation at $1$ gives the above bijection sending $u$ to a subset $u(1)$ of $H$.
Furthermore, by Theorem \ref{thm:convolution}, this bijection identifies the convolution of Kleisli points with the usual subset product, i.e., $A\star B=\{ab:a\in A,\ b\in B\}$. Then $\theta_{\fin,H}$ is an isomorphism of monoids
\end{proof}

\begin{remark}\rm
In the additive case, the distinguished point is written $0$ and $\Pfinred(H)$ is correspondingly written $\mathcal P_{\fin,0}(H)$.
This is only the conventional additive replacement of the identity symbol, and no new object is introduced.
\end{remark}

\section{Faithful convolution representations} \label{sect:faith convol repr}

\textsl{This section introduces faithful Kleisli convolution representations and a rigidity-transfer theorem for reduced finitary power monoids.}

\subsection{Faithful group models and rigidity transfer} \label{subsect:rigidity transfer}

\begin{definition}\label{def:rigidity}\rm \
\begin{enumerate}[label={\rm(\arabic*)}]
  \item Let $M$ be a semigroup. We say that $M$ is \defines{rigid} if every
    automorphism of $M$ is the identity automorphism, or equivalently, $\Aut(M)=\{\id_M\}$.
  \item More generally, a Kleisli Hom-space $\Hom_{\Kl(\mathscr T)}(B,H)$ is called \defines{rigid}
    if it is rigid as a semigroup under convolution.
    If the convolution has an identity, then it is equivalently rigid as a monoid.
\end{enumerate}
\end{definition}

\begin{remark}\label{rem:rigidity-isomorphism} \rm
Rigidity is invariant under semigroup isomorphisms. Indeed, if
$\rho:M\to N$ is an isomorphism, then conjugation by $\rho$ gives a
group isomorphism
\[ \Aut(M)\to\Aut(N),\quad \varphi\mapsto\rho\compos\varphi\compos\rho^{-1}. \]
Consequently, $M$ is rigid if and only if $N$ is rigid.
\end{remark}

Let $S$ be a \defines{numerical monoid}, that is, an additive submonoid of $\mathbb N = \{0,1,2,\ldots\}$ with $|\mathbb N_0\backslash S| < \infty$. For simplicity, we define $M_S:=\mathcal P_{\fin,0}(S)$.
Its multiplication is the sumset convolution $A\star B=\{a+b:a\in A,\ b\in B\}$.
By Theorem \ref{thm:Kleisli=Pfin}, we have an isomorphism
$\theta_{\fin,S}: \Hom_{\Kl(\mathscr P_{\fin})}(\mathbb Z/2\mathbb Z,S) \cong M_S$
of monoids.

The inclusion $S\subseteq\mathbb Z$ gives an injective homomorphism
\[  M_S \mathop{\hookrightarrow}\limits^{\subseteq} \mathcal P_{\fin,0}(\mathbb Z),
   \quad A\mapsto A \]
of monoids. Thus, the reduced power monoid already has a canonical faithful convolution
representation in a group.

\begin{theorem}\label{thm:rigidity-transfer}
Let $q:F \twoheadrightarrow \mathbb Z$ be a surjective group homomorphism. Then the following statements hold.
\begin{enumerate}[label={\rm(\arabic*)}]
  \item The map $\rho_q:M_S \to\Pplus(F)$, $\rho_q(A):=q^{-1}(A)$ is an injective semigroup homomorphism.
    \label{rigidity-transfer 1}
  \item Conjugation by $\rho_q$ induces a isomorphism $\Aut(M_S) \to \Aut(\rho_q(M_S))$,
    $\varphi\mapsto\rho_q\compos\varphi\compos\rho_q^{-1}$ of groups.
    \label{rigidity-transfer 2}
  \item For all $A,B,X\in M_S$, $A\star X=B\star X$ if and only if $\rho_q(A)\star\rho_q(X)=\rho_q(B)\star\rho_q(X)$.
    \label{rigidity-transfer 3}
\end{enumerate}
\end{theorem}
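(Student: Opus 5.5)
The plan is to derive all three parts from Proposition \ref{prop:base-change}, applied to the surjection $q:F\twoheadrightarrow\mathbb Z$, together with the inclusion $M_S\hookrightarrow\Pplus(\mathbb Z)$ noted just before the statement.

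\textbf{Part \ref{rigidity-transfer 1}.} Every $A\in M_S$ is a non-empty subset of $S\subseteq\mathbb Z$, since $0\in A$. Hence $\rho_q$ is the restriction to $M_S$ of $q^{-1}:\Pplus(\mathbb Z)\to\Pplus(F)$. Surjectivity of $q$ guarantees that $q^{-1}(A)\neq\varnothing$.

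The sumset $A\star B$ in $M_S$ agrees with the product in $\Pplus(\mathbb Z)$ by Theorem \ref{thm:convolution}. Proposition \ref{prop:base-change}(1) then gives
\[ q^{-1}(A\star B)=q^{-1}(A)\star q^{-1}(B), \]
and it also gives injectivity. A restriction of an injective semigroup homomorphism to a subsemigroup is again one, which proves \ref{rigidity-transfer 1}.

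\textbf{Part \ref{rigidity-transfer 2}.} Because $\rho_q$ is injective and multiplicative, the image $\rho_q(M_S)$ is a subsemigroup of $\Pplus(F)$: it is closed under $\star$ by the displayed identity. Corestricting, $\rho_q:M_S\to\rho_q(M_S)$ is a bijective semigroup homomorphism, hence a semigroup isomorphism.

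Remark \ref{rem:rigidity-isomorphism} now applies directly. The map $\varphi\mapsto\rho_q\compos\varphi\compos\rho_q^{-1}$ is a group isomorphism $\Aut(M_S)\to\Aut(\rho_q(M_S))$, with inverse given by conjugating with $\rho_q^{-1}$. The one point needing care is that $\Aut(\rho_q(M_S))$ must mean automorphisms of the image as an abstract semigroup, not of $\Pplus(F)$. As a side remark, $\rho_q(\{0\})=\Ker q$ is the identity of the image semigroup even though it is not $\{1_F\}$, consistent with Example \ref{ex:base-change}.

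\textbf{Part \ref{rigidity-transfer 3}.} For the forward direction, apply $\rho_q$ to $A\star X=B\star X$ and use multiplicativity. For the converse, rewrite $\rho_q(A)\star\rho_q(X)=\rho_q(B)\star\rho_q(X)$ as $\rho_q(A\star X)=\rho_q(B\star X)$, and conclude $A\star X=B\star X$ by injectivity.

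I expect no genuine obstacle. The only delicate points are bookkeeping:
\begin{itemize}
\item checking that the products in $M_S$ and in $\Pplus(\mathbb Z)$ coincide, so that Proposition \ref{prop:base-change} may be invoked;
\item interpreting $\Aut(\rho_q(M_S))$ correctly, as described in Part \ref{rigidity-transfer 2}.
\end{itemize}
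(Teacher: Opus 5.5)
Your proposal is correct and follows essentially the same route as the paper. Both arguments get part (1) from Proposition \ref{prop:base-change}, which supplies the homomorphism property and injectivity via surjectivity of $q$. Part (2) comes from viewing $\rho_q$ as an isomorphism onto its image and applying Remark \ref{rem:rigidity-isomorphism}. Part (3) follows from multiplicativity in one direction and injectivity in the other. Your extra bookkeeping, namely that $M_S$ is a subsemigroup of $\Pplus(\mathbb Z)$ and that $\Aut(\rho_q(M_S))$ means automorphisms of the image as an abstract semigroup, just makes explicit what the paper leaves implicit.
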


\begin{proof}
Proposition \ref{prop:base-change}, applied to $q$, gives $q^{-1}(A\star B)=q^{-1}(A)\star q^{-1}(B)$.
It also gives injectivity of $\rho_q$, i.e., if $q^{-1}(A)=q^{-1}(B)$, then applying $q$ and using surjectivity yields $A=B$.
Thus, \ref{rigidity-transfer 1} holds.
Hence $\rho_q$ is an isomorphism from $M_S$ onto its image $\Ima(\rho_q) = \rho_q(M_S)$,
and conjugation by any isomorphism induces the isomorphism of automorphism groups by Remark \ref{rem:rigidity-isomorphism}.
Then we obtain \ref{rigidity-transfer 2}.
Finally, for all $A,B,X\in M_S$, $\rho_q(A)\star\rho_q(X)=\rho_q(B)\star\rho_q(X)$ is preserved because $\rho_q$ is a homomorphism and reflected because it is injective.
\end{proof}

\begin{remark}\rm
Notice that $\rho_q(\{0\})=\Ker(q)$, which need not be the identity of $\Pplus(F)$ but is the identity inside the image $\rho_q(M_S)$.
This is why Theorem \ref{thm:rigidity-transfer} \ref{rigidity-transfer 1} is naturally stated for semigroups, whereas the image $\Ima(\rho_q) = \rho_q(M_S)$ is nevertheless a monoid in its own right.
\end{remark}

The isomorphism $\Aut(M_S) \cong \Aut(\rho_q(M_S))$ given by Theorem~\ref{thm:rigidity-transfer} \ref{rigidity-transfer 2} implies that the rigidity of $M_S$ can be reduced to the rigidity of $\rho_q(M_S)$. The latter is isomorphic to a submonoid of $\mathcal P_+(F)$, which, by Theorem \ref{thm:convolution}, is equivalent to study some submonoid of Kleisli Hom-space $\Hom_{\Kl(\mathscr P_+)}(1,F)$.
Furthermore, Theorem \ref{thm:rigidity-transfer} shows that rigidity may be checked in any of the faithful group models above.
What remains is to produce equations in the monoid that determine each finite subset.
The next section does this without introducing an additional system of coordinates.

%

\subsection{Application: Tringali--Yan Conjucture}

Tringali and Yan conjectured in \cite[Section~4]{TY2025} that the reduced finitary power monoid $\mathcal P_{\fin,0}(S)$ is rigid for every numerical monoid $S$ properly contained in $\mathbb N$.
This conjecture was proved by Bhowmik and Tringali in \cite[Theorem 1.2]{BhowmikTringali}.
Their proof is shorter than the proof given in this subsection.
Some arguments below are Kleisli reformulations of the corresponding arguments in \cite[Section 3]{BhowmikTringali}.
More precisely, Lemma \ref{lem:two-point-Kleisli-fixed} is the Kleisli version of \cite[Lemma 3.1]{BhowmikTringali},
the second part of Lemma \ref{lem:Kleisli-cardinality-maximum} is the Kleisli version of \cite[Lemma 3.2]{BhowmikTringali},
and Lemma \ref{lem:Kleisli-fixed-tails} is the Kleisli version of \cite[Lemma 3.4]{BhowmikTringali}.
Our purpose here is not to claim another solution of the conjecture,
but to present the relevant arguments in the language of Kleisli categories and to illustrate the possible applications of Kleisli convolution representations in semigroup theory.

Let $S$ be a numerical monoid and put
$M_S=\mathcal P_{\fin,0}(S)$ as in
Section~\ref{sect:faith convol repr}. For every $A\in M_S$, we define
$f_A:=\theta_{\fin,S}^{-1}(A)$ in this section, where
\[
 \theta_{\fin,S}\colon
 \Hom_{\Kl(\mathscr P_{\fin})}(\mathbb Z/2\mathbb Z,S)
 \xrightarrow{\cong}M_S
\]
is the isomorphism of monoids given in
Theorem~\ref{thm:Kleisli=Pfin}. Thus, $f_A$ is the unique Kleisli
arrow
\[
 f_A\colon\mathbb Z/2\mathbb Z\rightsquigarrow S
\]
whose underlying map is given by
\[
 f_A(0)=\{0\}
 \quad\text{and}\quad
 f_A(1)=A.
\]

Let $S$ be a numerical monoid and put $M_S=\mathcal P_{\fin,0}(S)$ as in Section \ref{sect:faith convol repr}.
For every $A\in M_S$, we define $f_A:=\theta_{\fin,S}^{-1}(A)$ in this section,
where $\theta_{\fin,S}$ is the isomorphism
$\theta_{\fin,S}: \Hom_{\Kl(\mathscr P_{\fin})}(\mathbb Z/2\mathbb Z,S)\xrightarrow{\cong} M_S$
of monoids given in Theorem \ref{thm:Kleisli=Pfin}.
Obviously, $f_A$ is the unique Kleisli arrow $f_A:\mathbb Z/2\mathbb Z\rightsquigarrow S$ whose underlying map
$f_A: \mathbb Z/2\mathbb Z \to M_S = \mathcal P_{\fin,0}(S)$ is given by $f_A(0)=\{0\}$ and $f_A(1)=A$.

The following lemma is the Kleisli version of \cite[Lemma 3.1]{BhowmikTringali}.

\begin{lemma}\label{lem:two-point-Kleisli-fixed}
Let $S$ be a numerical monoid and let $\alpha\in\Aut( \Hom_{\Kl(\mathscr P_{\fin})}(\mathbb Z/2\mathbb Z,S))$.
For every $s\in S$, let $f_{\{0,s\}}$ be the Kleisli arrow $f_{\{0,s\}} : \mathbb Z/2\mathbb Z \rightsquigarrow S$ determined by $f_{\{0,s\}}(0)=\{0\}$, $f_{\{0,s\}}(1)=\{0,s\}$.
Then $\alpha(f_{\{0,s\}})=f_{\{0,s\}}$ for all $s\in S$.
In other words, every automorphism of $\Hom_{\Kl(\mathscr P_{\fin})}(\mathbb Z/2\mathbb Z,S)$ fixes all Kleisli arrows $\mathbb Z/2\mathbb Z \rightsquigarrow S$.
\end{lemma}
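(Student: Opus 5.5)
Throughout, transport $\alpha$ along the isomorphism $\theta_{\fin,S}$ of Theorem \ref{thm:Kleisli=Pfin}: by Remark \ref{rem:rigidity-isomorphism}, $\varphi:=\theta_{\fin,S}\compos\alpha\compos\theta_{\fin,S}^{-1}$ is an automorphism of $M_S$. Since $\varphi(A)=B$ exactly when $\alpha(f_A)=f_B$, it suffices to prove $\varphi(\{0,s\})=\{0,s\}$ for all $s\in S$. The case $s=0$ is immediate, because $\{0\}$ is the identity of $M_S$. The argument is carried out inside $M_S$ with the sumset convolution $\star$ and then read back as a statement about Kleisli arrows.

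\emph{Step 1: $\varphi$ permutes the two-element sets.} The basic tool is the inequality $|A\star B|\ge |A|+|B|-1$ for finite $A,B\subseteq\mathbb Z$. Consequently every two-element set $\{0,s\}$ with $s\neq 0$ is an atom of $M_S$, that is, a non-unit that is not a product of two non-units. Since $\varphi$ preserves atoms, a further invariant is needed to single out the two-element atoms among all atoms. I would use the maximal factorization length $\lambda(X)$, which $\varphi$ preserves. The same inequality gives $\lambda(X)\le |X|-1$, with equality exactly when $X$ is a sum of $|X|-1$ two-element sets. For $A=\{0,s\}$ one has $|kA|=k+1$, hence $\lambda(kA)=k$. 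I would try to show that this condition fails for some $k$ whenever $A$ is an atom with $|A|\ge 3$. The idea is that $|kA|$ then grows at least like $2k+1$, and the extra room should produce sums of more than $k$ non-units, using two-element sets $\{0,t\}$ with $t$ in $A$ or $A-A$. \textbf{I expect this characterization to be the main obstacle.} If the $\lambda$-approach does not work cleanly, my fallback is a direct analysis of the equation $X\star Y=kA$, using Freiman-type information: $|2A|=2|A|-1$ holds only for arithmetic progressions.

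\emph{Step 2: $\varphi$ is a multiplication on two-element sets.} Once Step 1 is established, $\varphi(\{0,s\})=\{0,\sigma(s)\}$ for a bijection $\sigma$ of $S\setminus\{0\}$. From the identity
\[
\{0,s\}\star\{0,2s\}=\{0,s\}\star\{0,s\}\star\{0,s\}
\]
and, more generally, $\{0,s\}\star\{0,t\}=\{0,s,t,s+t\}$, I would derive that $\sigma$ is additive. The relevant point is that the only ways to write $\{0,s,2s,3s\}$ as a sum of two two-element sets are $\{0,s\}\star\{0,2s\}$ in either order. This yields $\sigma(2s)=2\sigma(s)$, and iterating the argument on the sets $\{0,a,b,a+b\}$ gives $\sigma(a+b)=\sigma(a)+\sigma(b)$ whenever $a,b,a+b\in S$.

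\emph{Step 3: $\sigma$ is the identity.} An additive bijection of a numerical monoid has the form $s\mapsto cs$ with $c$ a positive rational, obtained by comparing $\sigma$ on two coprime generators. Then $cS=S$ forces $c=1$: the set of gaps is finite and nonempty when $S\subsetneq\mathbb N$, and for $S=\mathbb N$ the same holds trivially. Hence $\sigma=\id$, so $\varphi(\{0,s\})=\{0,s\}$, and therefore $\alpha(f_{\{0,s\}})=f_{\{0,s\}}$ for every $s\in S$.
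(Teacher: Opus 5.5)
\textbf{Step 1 is a genuine gap, and the criterion you propose for it is false.}

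Take $A=\{0,2,5\}$. To place it inside any given $S$, use $\{0,2d,5d\}$ with $d$ large; everything below scales by $d$. This $A$ is an atom with $|A|=3$, yet $\lambda(kA)=k$ for every $k\geq 1$. The bound $\lambda(kA)\geq k$ is trivial. For the reverse bound, suppose $kA=X_1\star\cdots\star X_r$ with non-units $X_i\in M_S$.
\begin{itemize}
\item Every factor contains $0$, so $X_i\subseteq kA\subseteq\langle 2,5\rangle$.
\item Since $\sum_j\max X_j=5k$, for $x\in X_i$ we have $x+\sum_{j\neq i}\max X_j\in kA$. Hence $\max X_i-x\in 5k-kA=k\{0,3,5\}\subseteq\langle 3,5\rangle$. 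Taking $x=0$ gives $\max X_i\in\langle 3,5\rangle$.
\end{itemize}
So each $\max X_i$ is a positive element of $\langle 2,5\rangle\cap\langle 3,5\rangle$. The least such element is $5$, so $5r\le 5k$.

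Thus ``atom with $\lambda(kA)=k$ for all $k$'' does not separate $\{0,s\}$ from $\{0,2,5\}$. This happens even though $|kA|$ grows like $5k$: the extra room in $kA$ does not turn into longer factorizations.

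Your Freiman-type fallback presupposes that $\varphi$ preserves cardinalities, which you have not established. Step 1 is the real content of the lemma, and the paper does not prove it either. It imports the bijection $g$ with $\varphi(\{0,s\})=\{0,g(s)\}$ from \cite[Theorem 3.2 and Corollary 3.3]{TY2026}. Alternatively, the cardinality-preservation result \cite[Lemma~4]{Rago2026}, which the paper uses in the next lemma, gives $|\varphi(\{0,s\})|=2$ at once. Without one of these, or a genuinely different invariant, your argument does not get started.

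\textbf{Steps 2 and 3 follow the same line as the paper, with two repairs.}

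First, you do not justify the claim that ``iterating on $\{0,a,b,a+b\}$'' yields full additivity of $\sigma$, but you do not need it. For $m\geq n-1$ one has $m\{0,s\}\star\{0,ns\}=(m+n)\{0,s\}$. Applying $\varphi$ and comparing maxima gives $\sigma(ns)=n\sigma(s)$; the paper cites \cite[Proposition 4.3]{TY2026} for this. Then $b\,\sigma(a)=\sigma(ab)=a\,\sigma(b)$ for $a,b\in S\setminus\{0\}$ gives $\sigma(s)=cs$.

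Second, $c=1$ follows because the increasing bijection $s\mapsto cs$ of $S$ must fix the least positive element of $S$. This is cleaner than your gap-counting remark, which as written does not cover $S=\mathbb N$.
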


\begin{proof}
By Theorem \ref{thm:Kleisli=Pfin}, we have $\theta_{\fin,S}: \Hom_{\Kl(\mathscr P_{\fin})}(\mathbb Z/2\mathbb Z,S) \xrightarrow{\cong} M_S$,
and $\theta_{\fin,S}$ induces a conjugate isomorphism $\tilde{\theta}_{\fin,S}: \Aut(\Hom_{\Kl(\mathscr P_{\fin})}(\mathbb Z/2\mathbb Z,S)) \xrightarrow{\cong} \Aut(M_S)$, i.e., $\widetilde{\theta}_{\fin,S}(\alpha) := \theta_{\fin,S} \compos \alpha \compos \theta_{\fin,S}^{-1}$.
Thus, by \cite[Theorem 3.2 and Corollary 3.3]{TY2026}, there is a unique bijection $g:S\to S$ such that
$\widetilde{\theta}_{\fin,S}(\{0,s\})=\{0,g(s)\}$ for all $s\in S$. Note that $\theta_{\fin,S}(f_{\{0,s\}})=\{0,s\}$, then
\begin{align*}
   \theta_{\fin,S}(\alpha(f_{\{0,s\}}))
 = \widetilde{\theta}_{\fin,S}\bigl(\theta_{\fin,S}(f_{\{0,s\}})\bigr)
 = \widetilde{\theta}_{\fin,S}(\{0,s\})
 = \{0,g(s)\}
 = \theta_S(f_{\{0,g(s)\}}), \quad \forall s\in S.
\end{align*}
It follows that
\begin{equation}\label{eq:Kleisli-pullback}
  \alpha(f_{\{0,s\}}) = f_{\{0,g(s)\}} \text{ for all } s\in S.
\end{equation}
In particular, we have $g(0)=0$.
Notice that $S$ is a numerical monoid, then it is cancellative, and by \cite[Proposition 4.3]{TY2026},
we have $g(ns)=ng(s)$ for all $s\in S$ and $n\in\mathbb N$.
Let $0\ne a,b\in S$. The ordinary integer product $ab$ can be regarded both as the $b$-fold sum of $a$ and as the $a$-fold sum of $b$. Thus, we obtain $bg(a)=g(ba)=g(ab)=ag(b)$, and so $\frac{g(a)}a=\frac{g(b)}b$.
Hence there exists $\lambda\in\mathbb Q_{>0}$ such that$g(s)=\lambda s$ for all $s\in S$.
Assume $\min(S\setminus\{0\})=m$. Since multiplication by the positive rational number $\lambda$ is strictly increasing and $g$ is a bijection from $S$ to itself, $g(m)$ must again be the least positive element of $S$. Thus $g(m)=m$.
It follows that $\lambda=1$, and consequently, we obtain that
$g(s)= s$ for all $s\in S$. Finally, by \eqref{eq:Kleisli-pullback},
we obtain $\alpha(f_{\{0,s\}})=f_{\{0,s\}}$ for all $s\in S$ as required.
\end{proof}

The assertion concerning the maximum in the following lemma is the Kleisli version of \cite[Lemma 3.2]{BhowmikTringali}.

\begin{lemma}\label{lem:Kleisli-cardinality-maximum}
Let $S$ be a numerical monoid and let $\alpha\in\Aut( \Hom_{\Kl(\mathscr P_{\fin})}(\mathbb Z/2\mathbb Z,S))$.
For $A,A'\in M_S$ with $\alpha(f_A)=f_{A'}$, we have $|A'|=|A|$ and $\max A'=\max A$.
\end{lemma}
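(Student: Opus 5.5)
Transport $\alpha$ to $\varphi:=\widetilde\theta_{\fin,S}(\alpha)\in\Aut(M_S)$ as in the proof of Lemma \ref{lem:two-point-Kleisli-fixed}. Then $\alpha(f_A)=f_{A'}$ means $\varphi(A)=A'$, and it suffices to prove $|\varphi(A)|=|A|$ and $\max\varphi(A)=\max A$ for all $A\in M_S$. Lemma \ref{lem:two-point-Kleisli-fixed} gives $\varphi(\{0,s\})=\{0,s\}$ for every $s\in S$. Throughout I would use that $\varphi$ preserves every equation in $M_S$ and also relations such as divisibility ``$X=Y\star Z$ for some $Z$''.

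\textbf{Maximum.} I would characterize $\max A$ by a divisibility condition that $\varphi$ visibly preserves. For $n\in S$, set $I_n:=\{0,1,\dots,n\}\cap S\in M_S$. I claim $\max A\le n$ if and only if $A\star I_n$ has a specific form fixed by $\varphi$, but that is awkward. A cleaner route uses absorption. For $k\ge 1$, $A\star\{0,s\}^{\star k}=A+\{0,s,\dots,ks\}$. Take $s$ large, say $s>2\max A$ and $s>2\max A'$. Then the sets $A+js$ are pairwise disjoint translates, so $\max(A\star\{0,s\})=\max A+s$, and $\varphi(A\star\{0,s\})=A'\star\{0,s\}$. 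This alone does not compare maxima, so I need an invariant.

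I would use the following characterization: $\max A$ is the least $n$ such that $A\star\{0,n\}\star A'' \ne\ldots$, which is getting complicated. Instead I would use an order-theoretic device. For $B,C\in M_S$, $B\subseteq C$ iff $B\star C=C$ when $C$ is an interval-like set, but $C$ must be $\varphi$-fixed. So I define $E_n:=\{0,1,\dots,n\}$ (for $n$ beyond the Frobenius number this lies in $S$ up to the gaps; if needed I take $E_n:=\{0\}\cup([c,n]\cap\mathbb N)$ with $c$ the conductor). Then $\max A\le n-c$ roughly iff $A\star E_n$ has maximum $n+\max A$. A more robust invariant is the following: $\max(A)=m$ is detected by $A\star\{0,s\}^{\star k}$ for large $s,k$. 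I would show that $E_n$ is $\varphi$-fixed by writing it as a sumset of two-point sets, or as the unique solution of an equation such as $X\star X = X\star\{0,\dots\}$ built from fixed elements. This step is the main obstacle: producing enough $\varphi$-fixed ``interval'' elements from the fixed two-point sets. My first attempt would be the identity $\{0,1\}^{\star n}=\{0,\dots,n\}$ in $\mathbb N$, adapted to $S$ via elements $\{0,s\}$ with $s,s+1\in S$ above the conductor. The sumsets $\{0,s\}\star\{0,s+1\}\star\cdots$ give fixed sets containing long blocks of consecutive integers.

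\textbf{Cardinality.} Once maxima are preserved, I would take $s>2\max A$. The map $A\mapsto A\star\{0,s\}$ has cardinality $2|A|$. Iterating, $A\star\{0,s\}\star\{0,s'\}\star\cdots$ has cardinality $2^k|A|$ with an explicitly computable maximum. To isolate $|A|$, I would compare against fixed sets: $|A|$ equals the number of $t\in S$ with $\max(A\star\{0,t\})<\max A+t$ or $t$ small. Alternatively, $|A|$ is characterized by the maxima of the products $A\star\{0,s\}$ over $s\in S$. Concretely, $|A\star\{0,s\}|<2|A|$ iff $s\in A-A$, and I would encode $A-A$ through the preserved maxima. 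I would need to check this encoding carefully, following Bhowmik--Tringali's Lemma 3.2 argument.

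Finally, I would transfer both conclusions back to $f_A,f_{A'}$ via $\theta_{\fin,S}$.
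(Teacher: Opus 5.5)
\textbf{There is a genuine gap: neither assertion is actually proved.} The maximum half is where the essential idea is missing.

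\textbf{Maximum.} Every invariant you write down holds for every $A$. Indeed $\max(A\star E_n)=n+\max A$ and $\max(A\star\{0,t\})=\max A+t$ for all $A\in M_S$. So these cannot distinguish $A$ from $\varphi(A)$ unless you already know that $\varphi$ preserves maxima, which is circular.

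Your plan to manufacture $\varphi$-fixed ``intervals'' from the fixed two-point sets also fails as stated. The product $\{0,s_1\}\star\cdots\star\{0,s_k\}$ is the set of subset sums of $s_1,\dots,s_k$, so it is symmetric under $x\mapsto\sum_i s_i-x$. A set $\{0\}\cup\{c,\dots,n\}$ with $2\le c\le n-1$ is not symmetric, since symmetry would force it to contain $1$.

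In the paper the dependence runs the other way. Fixedness of the tails $I[a,N]$ (Lemma \ref{lem:Kleisli-fixed-tails}) is deduced \emph{from} this lemma, via $|\varphi(T_a)|=3$ and $\max\varphi(T_a)=a+1$ for $T_a=\{0,a,a+1\}$.

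The missing idea is that powers of $A$ stabilize. Write $X^{\star k}$ for the $k$-fold product and $m=\max A>0$. Then $A^{\star(k+1)}=A^{\star k}\star\{0,m\}$ for all large $k$ \cite[Lemma 2.2]{TriYan2025}. This is a single equation in $M_S$ involving only $A$ and the fixed element $\{0,m\}$. Applying $\varphi$ gives $(A')^{\star(k+1)}=(A')^{\star k}\star\{0,m\}$. Comparing maxima yields $(k+1)\max A'=k\max A'+m$, i.e.\ $\max A'=m$. No intervals and no cardinality information are needed.

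\textbf{Cardinality.} Here too there is no argument, and you say yourself that the encoding still has to be checked.
\begin{itemize}
\item Counting $t$ with $\max(A\star\{0,t\})<\max A+t$ is vacuous, for the reason above.
\item ``The maxima of the products $A\star\{0,s\}$'' likewise carry no information about $|A|$.
\item The criterion $|A\star\{0,s\}|<2|A|\iff s\in A-A$ is correct, but it is phrased in terms of cardinalities. Those are exactly what is not yet known to be $\varphi$-invariant.
\end{itemize}
Bhowmik--Tringali's Lemma 3.2 is the source of the \emph{maximum} statement, not the cardinality one.

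The paper does not derive cardinality from maxima at all. It invokes \cite[Lemma 4]{Rago2026}: automorphisms of the reduced finitary power monoid preserve cardinality whenever the underlying monoid is commutative, cancellative and has an element of infinite order, all of which hold for a numerical monoid. Without that result, or a genuine replacement for it, this half of your proposal remains open.
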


\begin{proof}
By Theorem~\ref{thm:Kleisli=Pfin}, we have an isomorphism $\theta_{\fin,S}: \Hom_{\Kl(\mathscr P_{\fin})} (\mathbb Z/2\mathbb Z,S) \xrightarrow{\cong} M_S$ of monoids and an isomorphism $\tilde{\theta}_{\fin,S}: \Aut(\Hom_{\Kl(\mathscr P_{\fin})} (\mathbb Z/2\mathbb Z,S)) \xrightarrow{\cong} \Aut(M_S)$, where $\tilde{\theta}_{\fin,S}=\theta_{\fin,S}\compos\alpha\compos\theta_{\fin,S}^{-1}$.
Then $\tilde{\theta}_{\fin,S}\in\Aut(M_S)$. Note that $\theta_{\fin,S}(f_A)=A$ and $\theta_{\fin,S}(f_{A'})=A'$,
then, by $\alpha(f_A)=f_{A'}$, we have
\[ \tilde{\theta}_{\fin,S}(A)
 = \theta_{\fin,S}(\alpha(\theta_S^{-1}(A)))
 = \theta_{\fin,S}(\alpha(f_A))
 = \theta_{\fin,S}(f_{A'})
 = A'.\]
Since $S$ is commutative and cancellative, and it contains an element of infinite order, \cite[Lemma~4]{Rago2026} implies that $\tilde{\theta}_{\fin,S}$ preserves cardinality of set. Then $|A'| = |\tilde{\theta}_{\fin,S}(A)| = |A|$.

Now, we need show $\max A'=\max A$.
If $A=\{0\}$, i.e., $A$ has only one element which is identity element of $M_S$, then $\tilde{\theta}_{\fin,S}(A)=A$, and so $A'=A$ in this case, we are done.
Thus, we may therefore assume that $m:=\max A>0$.

By \cite[Lemma 2.2]{TriYan2025}, for every sufficiently large integer $k$, we have
\begin{equation}\label{eq:stabilization-in-MS}
  \underbrace{A\star A \star \cdots \star A \star A}\limits_{\text{$k+1$ times}}
= \underbrace{A\star A \star \cdots \star A}\limits_{\text{$k$ times}}\star\{0,m\}.
\end{equation}
Lemma \ref{lem:two-point-Kleisli-fixed} admits $\alpha(f_{\{0,m\}}) = f_{\{0,m\}}$.
Therefore,
\[ \tilde{\theta}_{\fin,S}(\{0,m\})
 = \theta_{\fin,S}(\alpha(\theta_{\fin,S}^{-1}(\{0,m\})))
 = \theta_{\fin,S}(\alpha(f_{\{0,m\}}))
 = \theta_{\fin,S}(f_{\{0,m\}})
 = \{0,m\}. \]
Applying $\tilde{\theta}_{\fin,S}$ to \eqref{eq:stabilization-in-MS}, and using $\tilde{\theta}_{\fin,S} (A)=A'$, we obtain
\[ \underbrace{A'\star A' \star \cdots \star A' \star A'}\limits_{\text{$k+1$ times}}
 = \underbrace{A'\star A' \star \cdots \star A'}\limits_{\text{$k$ times}}\star\{0,m\}.\]
Then $(k+1)\max A' = k\max A'+m$, and so $\max A'=m=\max A$.
\end{proof}

The following lemma is the Kleisli version of \cite[Lemma 3.4]{BhowmikTringali}.

\begin{lemma}\label{lem:Kleisli-fixed-tails}
Let $S$ be a numerical monoid properly contained in $\mathbb N$, and let $\alpha\in\Aut( \Hom_{\Kl(\mathscr P_{\fin})}(\mathbb Z/2\mathbb Z,S))$.
Let $c(S):=1+\max(\mathbb Z\setminus S)$. 
For $a\geqslant c(S)$ and $N\geqslant a$, define $I[a,N]:=\{0\}\cup\{a,a+1,\ldots,N\}$.
Then, for every $a\geqslant c(S)$, there exists an integer $B_a\geqslant a$ such that
\[  N\geqslant  B_a \text{~yields that~} \alpha(f_{I[a,N]})=f_{I[a,N]}.\]
\end{lemma}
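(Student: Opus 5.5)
The plan is to transport everything to $M_S$ via $\tilde\theta:=\tilde{\theta}_{\fin,S}(\alpha)=\theta_{\fin,S}\compos\alpha\compos\theta_{\fin,S}^{-1}\in\Aut(M_S)$, as in the proofs of Lemmas~\ref{lem:two-point-Kleisli-fixed} and \ref{lem:Kleisli-cardinality-maximum}. The statement then becomes $\tilde\theta(I[a,N])=I[a,N]$ for all $N\geqslant B_a$. Put $X:=I[a,N]$ and $X':=\tilde\theta(X)$.

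\textbf{Step 1 (the shape of $X'$).} Lemma~\ref{lem:Kleisli-cardinality-maximum} gives $|X'|=|X|=N-a+2$ and $\max X'=N$. Since $0\in X'$, we can write $X'=\{0\}\cup T$ with
\[
T\subseteq S\cap[1,N],\qquad |T|=N-a+1,\qquad \max T=N .
\]
Because $a\geqslant c(S)$, the set $S\cap[1,N]$ consists of $[a,N]$ together with the finitely many elements of $S\cap[1,a-1]$. Hence $X'\neq X$ exactly when $t:=\min T<a$. In that case $T$ omits at least one point of $[a,N]$, and it does so by exactly as many points as it uses below $a$.

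\textbf{Step 2 (an invariant that sees $\min T$).} Since $\tilde\theta$ is an automorphism, it preserves $k$-fold sums, and it preserves cardinality again by Lemma~\ref{lem:Kleisli-cardinality-maximum}. Therefore
\[
|kX'|=|kX|\quad\text{for all } k\geqslant 1 .
\]
For $N\geqslant 2a$ we have $kX=\{0\}\cup[a,kN]$, so $|kX|=kN-a+2$.

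For $X'$, we use $kX'\supseteq\{0\}\cup T\cup 2T\cup\cdots\cup kT$. Here $kT$ lies in $[kt,kN]$. Moreover, because $T$ contains almost all of $[a,N]$ (all but at most $a$ points), $kT$ contains all of $[ka+C,kN]$ for a constant $C=C(a,S)$ once $N$ is large. The point $t<a$ contributes the extra elements $t,2t,\dots$, together with sums such as $(k-1)t+x$ for $x\in T\cap[a,N]$. These land in $[(k-1)t+a,(k-1)a+a)$, which is strictly below $ka$. For $k$ large, the number of such new elements below $ka$ grows roughly like $k(a-t)$. The loss from the at most $a$ missing points of $[a,N]$ affects only a bounded window near the bottom of $kT$. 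Comparing the two counts should give $|kX'|>kN-a+2$ for suitable $k$, which contradicts the equality above. So $t\geqslant a$, and $X'=X$.

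\textbf{Step 3 (a simpler alternative for the counting).} Instead of general $k$, one can try to use only the stabilization \eqref{eq:stabilization-in-MS} from \cite[Lemma 2.2]{TriYan2025}. Since $\tilde\theta$ fixes every $\{0,s\}$ by Lemma~\ref{lem:two-point-Kleisli-fixed}, the stabilization relations transfer from $X$ to $X'$. The minimum positive element of $kX'\setminus\{0\}$ is then controlled by $t$, and these relations alone might already force $t\geqslant a$.

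The constant $B_a$ is chosen at the end: large enough that $N\geqslant 2a$, that $[a,N]$ contains all the sums needed in Step 2, and that the "bounded loss versus linearly growing gain" comparison wins. I expect the main obstacle to be the counting in Step 2. It must be uniform over all possible $T$, because $T$ may drop up to $a-1$ points of $[a,N]$ in awkward places. The delicate part is to prove rigorously that the new sums produced below $ka$ by $t$ strictly outnumber any sums lost because of the missing points.
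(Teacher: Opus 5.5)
There is a genuine gap in Step~2, and Step~3 does not close it: the inequality $|kX'|>|kX|$ you hope for is false in general.

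Your heuristic goes wrong in two places.

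\begin{itemize}
\item The gain is miscounted. Since $0\in X$, the set $kX=\{0\}\cup[a,kN]$ already contains all of $[a,ka)$, so sums such as $(k-1)t+x$ are not new. The only extra elements of $kX'$ are those in $[1,a-1]$, and there are at most $|S\cap[1,a-1]|$ of them, uniformly in $k$.
\item The losses are not confined to the bottom of $kT$. For example, if $N-1\notin T$, then $kN-1\notin kX'$ for every $k$. The only way to write $kN-1$ as a sum of at most $k$ elements of $T$ is with $k-1$ copies of $N$ and one copy of $N-1$.
\end{itemize}

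So you are comparing a bounded gain with a bounded loss, and the two can cancel exactly. Concretely, let $S=\langle 3,5\rangle$, so $c(S)=8$. Take $a=8$, $N\geqslant 18$, $X=I[8,N]$ and $X'=\{0,5\}\cup[9,N]$. Then:
\begin{itemize}
\item $X'\subseteq S$, $|X'|=|X|$ and $\max X'=\max X$;
\item $kX'=\{0,5\}\cup[9,kN]$, so $|kX'|=kN-6=|kX|$ for every $k\geqslant 1$;
\item $(k+1)X'=kX'\star\{0,N\}$ for all $k\geqslant 2$, exactly as for $X$.
\end{itemize}
Hence no argument that uses only cardinality, maximum, and sums of $X'$ with itself and with $\{0,\max X'\}$ can exclude this $X'$.

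The missing idea is to test the candidate image against other fixed elements. In the example, $|X\star\{0,5\}|=N$ while $|X'\star\{0,5\}|=N-1$.

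The paper never classifies the possible images of $I[a,N]$. It first pins down the three-element set $T_a=\{0,a,a+1\}$. Its image $Y$ has $|Y|=3$ and $\max Y=a+1$. Moreover $|Y\star\{0,a\}|=|T_a\star\{0,a\}|=5$ forces $|Y\cap(a+Y)|=1$, and then $a\in Y$ because $1\notin S$.

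The paper then uses $T_a\star T_{a+1}\star\cdots\star T_{2a}=I[a,N_a]$, together with $I[a,N]\star\{0,a\}=I[a,N+a]$ and $I[a,N]\star T_a=I[a,N+a+1]$. These write every $I[a,N]$ with $N\geqslant N_a+a(a-1)$ as a product of fixed elements, so each such $I[a,N]$ is fixed. No counting over all candidate images is needed.
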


\begin{proof}
Note that $1\notin S$.
For every $a\geqslant c(S)$, we define $T_a:=\{0,a,a+1\}$, and assume $\alpha(f_{T_a})=f_Y$.
Then we have $\widetilde{\theta}_S(\alpha)(T_a)=Y$, equivalently.
By Lemma~\ref{lem:Kleisli-cardinality-maximum}, we have $|Y|=|T_a|=3$ and $\max Y=\max T_a=a+1$.
Thus, we have $T_a\star\{0,a\} = \{0,a,a+1,2a,2a+1\}$ by Theorem \ref{thm:convolution}, and have
\[ |Y\star\{0,a\}|
  \mathop{=}\limits^{\spadesuit} |\widetilde{\theta}_S(\alpha)(T_a)\star\widetilde{\theta}_S(\alpha)(\{0,a\})|
  \mathop{=}\limits^{{\color{red}\heartsuit}} |\widetilde{\theta}_S(\alpha)(T_a\star\{0,a\})|
  \mathop{=}\limits^{\clubsuit} |T_a\star\{0,a\}|
  \mathop{=}\limits^{{\color{red}\diamondsuit}} 5,\]
where $\spadesuit$ holds by the formula $\widetilde{\theta}_S(\alpha)(\{0,a\})=\{0,a\}$ given in Lemma \ref{lem:two-point-Kleisli-fixed},
$\color{red}\heartsuit$ holds since $\widetilde{\theta}_S(\alpha)$ preserves convolution,
$\clubsuit$ holds since $\widetilde{\theta}_S(\alpha)$ preserves cardinality,
and $\color{red}\diamondsuit$ is obtained by the fact that $a\geqslant c(S)\geqslant 2$ follows that the five elements in $T_a\star\{0,a\}$ are distinct.

On the other hand, $Y\star\{0,a\}=Y\cup(a+Y)$
and therefore $|Y\star\{0,a\}| = |Y|+|a+Y|-|Y\cap(a+Y)| = 2|Y|-|Y\cap(a+Y)|$.
Since $|Y|=3$ and $|Y\star\{0,a\}|=5$, we obtain $|Y\cap(a+Y)|=1$.
Then there exists $x\in Y$ such that $x+a\in Y$, and $ x+a\leqslant \max Y=a+1$.
Thus, $x\leqslant 1$. By $Y\subseteq S$ and $1\notin S$, we have $x=0$, and so $a\in Y$.

Moreover, $0\in Y$ because $Y\in M_S$, and $a+1\in Y$ because
$\max Y=a+1$. Since $|Y|=3$, we conclude that $Y=\{0,a,a+1\}=T_a$. Thus,
\begin{equation}\label{eq:Kleisli-fixed-tails 1}
  \widetilde{\theta}_S(\alpha)(T_a) = Y = T_a = \{0,a,a+1\}, \quad a\geqslant c(S).
\end{equation}
By \cite[Proposition 2.10]{TY2025}, we hvae
\begin{equation}\label{eq:Kleisli-fixed-tails 2}
  T_a\star T_{a+1}\star\cdots\star T_{2a} = I[a,N_a], \text{~where~} N_a=a(a+1)+\frac{(a+1)(a+2)}2
\end{equation}
By \eqref{eq:Kleisli-fixed-tails 1} and \eqref{eq:Kleisli-fixed-tails 2}, we obtain
\begin{align*}
   & \widetilde{\theta}_S(\alpha)(I[a,N_a]) \\
=\ & \widetilde{\theta}_S(\alpha)(T_a\star T_{a+1}\star\cdots\star T_{2a}) \\
=\ & \widetilde{\theta}_S(\alpha)(T_a)\star \widetilde{\theta}_S(\alpha)(T_{a+1}) \star \cdots \star \widetilde{\theta}_S(\alpha)(T_{2a}) \\
=\ & T_a\star T_{a+1}\star\cdots\star T_{2a} \\
=\ & I[a,N_a],
\end{align*}
i.e.,
\begin{equation}\label{eq:initial-tail-fixed}
  \widetilde{\theta}_S(\alpha)(I[a,N_a])=I[a,N_a].
\end{equation}
Notice that $N_a\geqslant 2a-1$. For every $N\geqslant 2a-1$, direct calculation gives
\begin{align}\label{eq:tail-increment-a}
 I[a,N]\star\{0,a\} = \{0\}\cup\{a,\ldots,N\}\cup\{2a,\ldots,N+a\} = I[a,N+a]
\end{align}
and
\begin{align} \label{eq:tail-increment-a+1}
   & I[a,N]\star T_a = I[a,N]\star\{0,a,a+1\} \nonumber \\
=\ & I[a,N+a]\cup\{a+1\}\cup\{2a+1,\ldots,N+a+1\} = I[a,N+a+1].
\end{align}
Hence \eqref{eq:initial-tail-fixed}--\eqref{eq:tail-increment-a+1} imply that
\[ \widetilde{\theta}_S(\alpha)(I[a,N_a+u])=I[a,N_a+u] \]
for every $u$ in the additive monoid $S'$ generated by $a$ and $a+1$.
Every integer $u\geqslant a(a-1)$ belongs to this additive monoid $S'$.
Indeed, assume $u=qa+r$, $0 \leqslant r < a$, then $u\geqslant a(a-1)$ implies $q\geqslant a-1\geqslant r$, and then $u=(q-r)a+r(a+1)$.
If $N\geqslant B_a := N_a+a(a-1)$, then $N-N_a\geqslant a(a-1)$, and consequently $\tilde{\theta}_{\fin,S} (\alpha)(I[a,N])=I[a,N]$.
Therefore, \[\theta_{\fin,S}(\alpha(f_{I[a,N]})) = \tilde{\theta}_{\fin,S}(\alpha)(\theta_{\fin,S}(f_{I[a,N]}))
 = \tilde{\theta}_{\fin,S}(\alpha)(I[a,N]) = I[a,N] = \theta_{\fin,S}(\alpha)(f_{I[a,N]}). \]
It follows that $\alpha(f_{I[a,N]})=f_{I[a,N]}$ since $\theta_{\fin,S}$ is injective.
\end{proof}

\begin{proposition}\label{prop:Kleisli-membership-preserved}
Let $S$ be a numerical monoid properly contained in $\mathbb N$, and let $\alpha\in\Aut($ $\Hom_{\Kl(\mathscr P_{\fin})}(\mathbb Z/2\mathbb Z,S))$.
If $A,A'\in M_S$ satisfies $\alpha(f_A)=f_{A'}$ then $y\in A$ if and only if $y\in A'$ {\rm(}$y\in S${\rm)}.
Consequently, we have $A=A'$ and $\alpha(f_A)=f_A$.
\end{proposition}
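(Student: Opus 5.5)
The plan is to push everything through $\widetilde{\theta}_S(\alpha)\in\Aut(M_S)$ and reduce membership questions to \emph{cardinality} questions. By Lemma \ref{lem:Kleisli-cardinality-maximum}, cardinalities are preserved. By Lemmas \ref{lem:two-point-Kleisli-fixed} and \ref{lem:Kleisli-fixed-tails}, the sets $\{0,s\}$ and $I[b,N]$ (for $b\geqslant c(S)$, $N\geqslant B_b$) are fixed. Write $\varphi:=\widetilde{\theta}_S(\alpha)$, so $\varphi(A)=A'$, and put $m:=\max A=\max A'$ (Lemma \ref{lem:Kleisli-cardinality-maximum}). The case $A=\{0\}$ is trivial, since then $A'=\{0\}$.

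\textbf{Step 1 (a counting formula).} Let $C\in M_S$ with $\max C=m_C$, let $b\geqslant c(S)$, and let $N\geqslant\max\{B_b,\,b+m_C\}$. I claim
\[ C\star I[b,N]=\bigl(C\cap[0,b-1]\bigr)\cup\{b,b+1,\ldots,N+m_C\}. \]
Indeed, $C\star I[b,N]=C\cup\bigcup_{x\in C}[x+b,x+N]$. Each interval $[x+b,x+N]$ has length $N-b\geqslant m_C$, so consecutive intervals overlap, and their union is $[b,N+m_C]$. Elements of $C$ that are $\geqslant b$ are absorbed into this interval. Hence
\[ |C\star I[b,N]|=|C\cap[0,b-1]|+N+m_C-b+1. \]

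\textbf{Step 2 (membership for $y\geqslant c(S)$).} Apply $\varphi$ to the identity $A\star I[b,N]$. Since $I[b,N]$ is fixed, $\varphi$ is a homomorphism, and $\varphi$ preserves cardinality, we get $|A'\star I[b,N]|=|A\star I[b,N]|$. Since $\max A'=\max A$, Step 1 yields
\[ |A\cap[0,b-1]|=|A'\cap[0,b-1]|\quad\text{for every } b\geqslant c(S). \]
Subtracting the identities for $b=y+1$ and $b=y$ gives $y\in A\iff y\in A'$ for all $y\geqslant c(S)$. This argument applies to any pair $(C,\varphi(C))$, not just to $A$.

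\textbf{Step 3 (small $y$, the main obstacle).} For $y<c(S)$ the counting above cannot see $y$ directly. The fix is to translate $y$ upward using a fixed two-point set. Choose $s\in S$ with $s\geqslant c(S)$ and $s>m$. Then $A\star\{0,s\}=A\sqcup(A+s)$ is a disjoint union, because every element of $A$ is at most $m<s$. In particular, $y+s\in A\star\{0,s\}$ if and only if $y\in A$. By Lemma \ref{lem:two-point-Kleisli-fixed},
\[ \varphi(A\star\{0,s\})=A'\star\{0,s\}, \]
and the same disjointness holds for $A'$, because $\max A'=m<s$. Now apply Step 2 to the pair $(A\star\{0,s\},A'\star\{0,s\})$ and the element $y+s\geqslant c(S)$. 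This gives $y\in A\iff y+s\in A\star\{0,s\}\iff y+s\in A'\star\{0,s\}\iff y\in A'$.

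Hence $A=A'$. Injectivity of $\theta_{\fin,S}$ then gives $\alpha(f_A)=f_A$. The delicate points are checking the overlap condition $N-b\geqslant m_C$ in Step 1 and the disjointness of $A$ and $A+s$ in Step 3; both hold for suitable choices of $N$ and $s$.
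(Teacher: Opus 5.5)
Your proof is correct, and its skeleton is the paper's: large $y$ are handled with the fixed tails of Lemma~\ref{lem:Kleisli-fixed-tails}, and small $y$ are shifted upward by a fixed two-point set $\{0,s\}$ with $s>m$, exactly as the paper does for $0<y<c(S)$.

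The real difference is how you detect membership of a large $y$. The paper proves the equational criterion \eqref{eq:membership-by-translation}: if $\max X\leqslant m$ and $N\geqslant\max\{B_y,B_{y+1},y+m\}$, then $y\in X$ if and only if $I[y,N]\star X=I[y+1,N]\star X$. It transports this to $A'$ using only that the automorphism is an injective homomorphism fixing both tails. Membership is thus expressed by an equation in $M_S$, and no counting is needed at that stage.

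You instead compute $C\star I[b,N]$ explicitly, invoke cardinality preservation, and telescope in $b$. This uses the cardinality half of Lemma~\ref{lem:Kleisli-cardinality-maximum}, which costs nothing extra, because Lemma~\ref{lem:Kleisli-fixed-tails} already depends on it. In exchange, your route has some advantages:
\begin{itemize}
\item you need only one fixed tail at a time, with no common $N$ for $I[y,N]$ and $I[y+1,N]$;
\item you obtain the slightly stronger invariant $|A\cap[0,b-1]|=|A'\cap[0,b-1]|$ for all $b\geqslant c(S)$;
\item by stating Step~2 for an arbitrary pair $(C,\varphi(C))$, you make explicit the reapplication to $A\star\{0,s\}$, whose maximum exceeds $m$, which the paper performs only tacitly.
\end{itemize}
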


\begin{proof}
By $\alpha(f_A)=f_{A'}$, we have $\widetilde{\theta}_S(\alpha)(A) = (\theta_{\fin,S}\compos\alpha\compos\theta_{\fin,S}^{-1})(A)
= \theta_{\fin,S}(\alpha(f_A)) = \theta_{\fin,S}(f_A') = A'$ in this proof.
By Lemma \ref{lem:Kleisli-cardinality-maximum}, we have $\max A'=\max A$. We denote it by $\mathfrak{m}$ in this proof.

We first consider an element $y\in S$ satisfying $y\geqslant c(S)$.
Choose an integer $N\geqslant \max\{B_y,B_{y+1},y+\mathfrak{m}\}$.
We claim that, for every $X\in M_S$ with $\max X\leqslant \mathfrak{m}$, we have
\begin{equation}\label{eq:membership-by-translation}
  y\in X \quad\text{if and only if}\quad I[y,N]\star X=I[y+1,N]\star X.
\end{equation}
Indeed:
\begin{itemize}
  \item
For ``$\Rightarrow$'': Since $I[y+1,N]\subseteq I[y,N]$, the inclusion $I[y+1,N]\star X\subseteq I[y,N]\star X$ always holds.
Suppose that $y\in X$. Since $I[y,N]=I[y+1,N]\cup\{y\}$, it suffices to show that $y+X\subseteq I[y+1,N]\star X$.
If $x=0$, then we have $y=0+y\in I[y+1,N]\star X$ since $0\in I[y+1,N]$ and $y\in X$.
If $x>0$, then $y+1\leqslant y+x\leqslant y+\mathfrak{m} \leqslant N$. Hence $y+x\in I[y+1,N]$, and therefore $y+x=(y+x)+0\in I[y+1,N]\star X$. Thus, the equation $I[y,N]\star X=I[y+1,N]\star X$ holds.

  \item
Conversely, for ``$\Leftarrow$'':
If we have $I[y,N]\star X=I[y+1,N]\star X$, then we have $y=y+0\in I[y,N]\star X$, and then $y\in I[y+1,N]\star X$.
Thus, $y=h+x$ for some $h\in I[y+1,N]$ and $x\in X$.
If $h\ne0$, then $h\geqslant y+1$, it contradicts with $x\geqslant 0$.
Thus, $h=0$, and so we have $y=x\in X$. Therefore, \eqref{eq:membership-by-translation} holds.
\end{itemize}

By Lemma~\ref{lem:Kleisli-fixed-tails}, $\widetilde{\theta}_S(\alpha)(I[y,N])=I[y,N]$ and $\widetilde{\theta}_S(\alpha)(I[y+1,N])=I[y+1,N]$.
Since $\widetilde{\theta}_S(\alpha)$ is an injective homomorphism, the following equations are equivalent:
\begin{enumerate}[label={\rm(\arabic*)}]
  \item $I[y,N]\star A=I[y+1,N]\star A$;
    \label{26082113031}
  \item $\widetilde{\theta}_S(\alpha)(I[y,N]\star A) = \widetilde{\theta}_S(\alpha)(I[y+1,N]\star A)$;
    \label{26082113032}
  \item $I[y,N]\star A' = I[y+1,N]\star A'$ ~ (note: $\widetilde{\theta}_S(\alpha)(A)=A'$).
    \label{26082113033}
\end{enumerate}
By \eqref{eq:membership-by-translation}, \ref{26082113031}, and \ref{26082113033}, we obtain
\begin{equation}\label{eq:large-membership-preserved}
  y\in A \text{~if and only if~} y\in A' \text{~for all~} y\geqslant c(S).
\end{equation}

Now consider the element $y\in S$ satisfying $0<y<c(S)$,
since $|\mathbb{N}\backslash S|<\infty$,
we may choose an element $t\in S$ such that $t>\mathfrak{m}$ and $y+t\geqslant c(S)$.
Lemma \ref{lem:two-point-Kleisli-fixed} shows that $\widetilde{\theta}_S(\alpha)(\{0,t\})=\{0,t\}$,
then $\widetilde{\theta}_S(\alpha)(A\star\{0,t\})
= \widetilde{\theta}_S(\alpha)(A)\star\widetilde{\theta}_S(\alpha)(\{0,t\})
= A'\star\{0,t\}$. It follows that
\begin{equation}\label{eq:small-to-large-A}
  y\in A \text{~if and only if~} y+t\in A\star\{0,t\}
\end{equation}
Indeed:
\begin{itemize}
  \item The direction ``$\Rightarrow$'' of \eqref{eq:small-to-large-A} is trivial;
  \item Conversely, if $y+t\in A\star\{0,t\}=A\cup(t+A)$, then $y+t>\mathfrak{m}$, so $y+t\notin A$.
    Thus, $y+t=t+x$ for some $x\in A$, and cancellation gives $x=y$. Thus the direction ``$\Leftarrow$'' of \eqref{eq:small-to-large-A} holds
\end{itemize}
And the same argument gives
\begin{equation}\label{eq:small-to-large-A-prime}
  y\in A' \text{~if and only if~} y+t\in A'\star\{0,t\}.
\end{equation}
Recall that $y+t\geqslant c(S)$, notice that
$\tilde{\theta}_{\fin,S}(\alpha)(A\star\{0,t\})
= \tilde{\theta}_{\fin,S}(\alpha)(A)\star\tilde{\theta}_{\fin,S}(\alpha)(\{0,t\})
= A'\star\{0,t\}$, then \eqref{eq:large-membership-preserved} gives that
$y+t\in A\star\{0,t\}$ if and only if $y+t\in A'\star\{0,t\}$.
Combining it with \eqref{eq:small-to-large-A} and \eqref{eq:small-to-large-A-prime}, we obtain
\[ y\in A \text{~if and only if~} y\in A', \quad y\in S.\]
Note that $0\in A$ and $0\in A'$ here. Furthermore, we obtain $A=A'$. It follows that $\alpha(f_A)=f_{A'}=f_A$.
\end{proof}

\begin{theorem}\label{thm:direct-Kleisli-rigidity}
Let $S$ be a numerical monoid properly contained in $\mathbb N$,
then \[ \Aut(\Hom_{\Kl(\mathscr P_{\fin})}(\mathbb Z/2\mathbb Z,S)) \cong \{1\}. \]
\end{theorem}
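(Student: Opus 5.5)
The plan is to deduce the theorem directly from Proposition \ref{prop:Kleisli-membership-preserved}, which already shows that every automorphism fixes every Kleisli arrow. Let $\alpha\in\Aut(\Hom_{\Kl(\mathscr P_{\fin})}(\mathbb Z/2\mathbb Z,S))$ be arbitrary. We will show that $\alpha$ agrees with the identity on every element of the Hom-monoid.

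First, every element of the Hom-monoid has the form $f_A$ for some $A\in M_S$. This follows from the bijection $\theta_{\fin,S}$ of Theorem \ref{thm:Kleisli=Pfin}: for any $u$, we can take $A:=\theta_{\fin,S}(u)$, and then $u=f_A$.

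Second, fix $A\in M_S$. The element $\alpha(f_A)$ again lies in the Hom-monoid, so by the first step it equals $f_{A'}$ for a unique $A'\in M_S$, namely $A'=\theta_{\fin,S}(\alpha(f_A))$. Now the hypotheses of Proposition \ref{prop:Kleisli-membership-preserved} hold: $S$ is properly contained in $\mathbb N$, $\alpha$ is an automorphism, and $\alpha(f_A)=f_{A'}$. The proposition therefore gives $A=A'$, that is, $\alpha(f_A)=f_A$.

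Since $A$ was arbitrary, $\alpha=\id$. Hence the automorphism group is trivial, which is precisely the statement that $\Aut(\Hom_{\Kl(\mathscr P_{\fin})}(\mathbb Z/2\mathbb Z,S))\cong\{1\}$. Equivalently, by Remark \ref{rem:rigidity-isomorphism} applied to $\theta_{\fin,S}$, the monoid $M_S$ is rigid.

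There is no real obstacle left at this stage. All of the substantive work sits in the earlier lemmas: Lemma \ref{lem:two-point-Kleisli-fixed}, Lemma \ref{lem:Kleisli-cardinality-maximum}, Lemma \ref{lem:Kleisli-fixed-tails}, and Proposition \ref{prop:Kleisli-membership-preserved}. The only care needed is to check that the hypothesis ``properly contained in $\mathbb N$'' is available. It is needed so that $1\notin S$ and so that $c(S)\geqslant 2$, both of which are used in Lemma \ref{lem:Kleisli-fixed-tails}.
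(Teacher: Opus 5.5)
Your proposal is correct and is the same argument as the paper's: you write an arbitrary element as $f_A$ via the bijection $\theta_{\fin,S}$, write $\alpha(f_A)=f_{A'}$, apply Proposition~\ref{prop:Kleisli-membership-preserved} to get $A=A'$, and conclude $\alpha=\id$. Your note that properness of $S$ enters through $1\notin S$ and $c(S)\geqslant 2$ in Lemma~\ref{lem:Kleisli-fixed-tails} is also accurate.
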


\begin{proof}
Take an arbitrary automorphism $\alpha\in\Aut(\Hom_{\Kl(\mathscr P_{\fin})} (\mathbb Z/2\mathbb Z,S) )$ and an arbitrary Kleisli morphism $f \in \Hom_{\Kl(\mathscr P_{\fin})}(\mathbb Z/2\mathbb Z,S)$, then there exists a unique
$A\in M_S$ such that $f=f_A$.
Since $\alpha(f_A)$ is also an element of in same Kleisli Hom-space, there exists a unique $A'\in M_S$ such that $\alpha(f_A)=f_{A'}$.
Then by Proposition \ref{prop:Kleisli-membership-preserved}, we have $y\in A$ if and only if $y\in A'$ ($y\in S$).
Therefore, $A=A'$, and hence $\alpha(f) = \alpha(f_A) = f_{A'} = f_A = f$.
By the arbitrariness of $f$, we have $\alpha = \id_{\Hom_{\Kl(\mathscr P_{\fin})}
    (\mathbb Z/2\mathbb Z,S)}$ as required.
\end{proof}
\begin{remark}\rm
Tringali and Yan conjectured that $\mathcal P_{\fin,0}(S)$ is rigid whenever $S$ is a numerical monoid
properly contained in $\mathbb N$ (\cite[Section 4]{TY2025}). It is an easy consequence of Theorems \ref{thm:Kleisli=Pfin} and \ref{thm:direct-Kleisli-rigidity} that the Tringali--Yan conjecture holds.

\end{remark}
\paragraph{Authors' Contributions}


Regarding the author order: The authors are normally listed alphabetically by surname.
In this submission, however, Haicun Wen is placed as the first author to satisfy the doctoral graduation requirement of Northwest Normal University.
The remaining two authors are ordered alphabetically by surname.
All authors have approved this ordering, and all authors contributed equally to the conception, methodology, derivation, and
writing of this paper.

\paragraph{Competing Interests}

The author declare that they have no conflicts of interest as defined by the journal, nor any other interests that could be perceived as influencing the results presented in this paper.

\paragraph{Data availability}
Data sharing not applicable to this article as no datasets were generated or analysed during the current study

\paragraph{Ethical Approval}

This article does not require ethical approval.

\paragraph{Fundings}
Jian He is supported by the National Natural Science Foundation of China (Grant No. 12501048) and the Hongliu Outstanding Young Talents Funding of Lanzhou University of Technology.
Yu-Zhe Liu is supported by
the National Natural Science Foundation of China (Grant Nos. 12401042 and 12561008),
the Science and Technology Foundation of the Guizhou S\&T Department (Grant Nos. KJLYRC[2026]091, VZD[2026]001, ZD[2025]085 and ZK[2024]YiBan066),
and Scientific Research Foundation of Guizhou University (Grant No. [2023]16).

\addcontentsline{toc}{section}{References}

%

\end{document}